\newtheorem{theorem}{Theorem}
\newtheorem{corollary}[theorem]{Corollary}
\newtheorem{proposition}[theorem]{Proposition}
\newtheorem{lettertheorem}{Theorem}
\newtheorem{letterlemma}[lettertheorem]{Lemma}
\theoremstyle{definition}
\newtheorem{example}[theorem]{Example}
\theoremstyle{remark}
\numberwithin{equation}{section}
\newcommand{\D}{\mathbb{D}}
\newcommand{\DD}{\widehat{\mathcal{D}}}
\newcommand{\DDD}{\widecheck{\mathcal{D}}}
\newcommand{\N}{\mathbb{N}}
\newcommand{\R}{\mathbb{R}}
\newcommand{\HO}{\mathcal{H}}
\newcommand{\C}{\mathbb{C}}
\newcommand{\e}{\varepsilon}
\newcommand{\RR}{\mathcal{D}}
\newcommand{\T}{\mathbb{T}}
\newcommand{\whw}{\widehat{\nu}}
\def\a{\alpha}       \def\b{\beta}        \def\g{\gamma}
\def\d{\delta}           \def\e{\varepsilon}
     \def\om{\omega}      
\def\s{\sigma}       \def\t{\theta}       
         \def\r{\rho}
\begin{document}

% \title[short text for running head]{full title}
\title[]{Besov spaces induced by doubling weights}

\subjclass[2010]{Primary: 30H10, 30H25} %Secondary: ???}% and 30J15}
\keywords{doubling weight, Besov space, Hardy space, inner-outer factorization, mixed norm space, zero set}

\thanks{This research was supported in part by Academy of Finland project no.~286877 and Finnish Cultural Foundation.}

%\date{\today}

%%%%%%%%%%%%%%%%%%%%%%%%%%%%%
%%%% ----  ABSTRACT ---- %%%%
%%%%%%%%%%%%%%%%%%%%%%%%%%%%%

\begin{abstract}
Let $1\le p<\infty$, $0<q<\infty$ and $\nu$ be a two-sided doubling weight satisfying
    $$\sup_{0\le r<1}\frac{(1-r)^q}{\int_r^1\nu(t)\,dt}\int_0^r\frac{\nu(s)}{(1-s)^q}\,ds<\infty.$$
The weighted Besov space $\mathcal{B}_{\nu}^{p,q}$ consists of those $f\in H^p$ such that
    $$\int_0^1 \left(\int_{0}^{2\pi} |f'(re^{i\theta})|^p\,d\theta\right)^{q/p}\nu(r)\,dr<\infty.$$
Our main result gives a characterization for $f\in \mathcal{B}_{\nu}^{p,q}$ depending only on $|f|$, $p$, $q$ and $\nu$.

As a consequence of the main result and inner-outer factorization, we obtain several interesting by-products.
For instance, we show the following modification of a classical factorization by F.~and~R.~Nevanlinna:
If $f\in \mathcal{B}_{\nu}^{p,q}$, then there exist $f_1,f_2\in \mathcal{B}_{\nu}^{p,q} \cap H^\infty$ such that $f=f_1/f_2$.
Moreover, we give a sufficient and necessary condition guaranteeing that the product of
$f\in H^p$ and an inner function belongs to $\mathcal{B}_{\nu}^{p,q}$.
Applying this result, we make some observations on zero sets of $\mathcal{B}_{\nu}^{p,p}$.
\end{abstract}

\author{Atte Reijonen}
\address{University of Eastern Finland, P.O.Box 111, 80101 Joensuu, Finland}
\email{atte.reijonen@uef.fi}

\maketitle

%%%%%%%%%%%%%%%%%%%%%%%%%%%%%%%%%%%%%%%
%%%% ----  Text of article ----    %%%%
%%%%%%%%%%%%%%%%%%%%%%%%%%%%%%%%%%%%%%%

%%%%%%%%%%%%%%%%%%%%%%%%%%%%%%%%
%%%% ---- INTRODUCTION ---- %%%%
%%%%%%%%%%%%%%%%%%%%%%%%%%%%%%%%

\section{Introduction and characterizations}

Let $\D$ be the open unit disc of the complex plane $\C$ and $\T$ the boundary of $\D$.
The set of all analytic functions in $\D$ is denoted by $\HO(\D)$.
For $0<p<\infty$, the Hardy space $H^p$ consists of those $f\in \HO(\D)$ such that
    $$\|f\|_{H^p} =\sup_{0\le r<1} M_p(r,f)<\infty,$$
where
    \begin{equation*}
    \begin{split}
    M_p(r,f)=\left(\frac{1}{2\pi}\int_{0}^{2\pi} |f(re^{i\t})|^p\,d\t\right)^{1/p}.
    \end{split}
    \end{equation*}
The Hardy space $H^\infty$ is the set of all bounded functions in $\HO(\D)$.
Moreover, we recall that a measurable function $f$ on $\T$ belongs to $L^p(\T)$ for some $p\in (0,\infty)$ if
    $$\|f\|_{L^p}^p = \frac{1}{2\pi}\int_0^{2\pi} |f(e^{i\t})|^p\,d\t<\infty.$$
Alternatively, the Hardy space $H^p$ for $0<p<\infty$ can be characterized as follows: $f\in H^p$ if and only if $f\in \HO(\D)$,
non-tangential limit $f(e^{i\t})$ exists almost everywhere on $\T$ and $f(e^{i\t})\in L^p(\T)$.
In particular, $\|f\|_{H^p}=\|f\|_{L^p}$ for $0<p<\infty$ and $f\in H^p$.
This is due to Hardy's convexity and the mean convergence theorems.
These results and much more can be found in classic book \cite{Duren1970} by P.~Duren.

A function $\nu: \D \rightarrow [0,\infty)$ is called a (radial) weight if it is integrable over $\D$ and $\nu(z)=\nu(|z|)$ for all $z\in \D$.
For $0<p,q<\infty$ and a weight $\nu$, the weighted mixed norm space $A^{p,q}_\nu$ consists of those $f\in \HO(\D)$ such that
    $$
    \|f\|_{A^{p,q}_\nu}^q=\int_0^1 M_p^q(r,f)\, \nu(r)\,dr<\infty.
    $$
If $\nu(z)=(1-|z|)^\a$ for $-1<\a<\infty$, then the notation $A_\a^{p,q}$ is used for $A^{p,q}_\nu$.
In this note, we study class $\RR$ of so-called two-sided doubling weights, which originates from the work of J.~A.~ Pel\'aez and J.~R\"atty\"a \cite{PelRat2015, TwoweightII}.
For the definition of $\RR$ we have to define two wider classes.
For a weight $\nu$, set
    \begin{equation*}
    \widehat{\nu}(z)=\widehat{\nu}{(|z|)}=\int_{|z|}^1\nu(s)\,ds,\quad z\in \D.
    \end{equation*}
If a weight $\nu$ satisfies the condition $\widehat{\nu}(r)\le C\widehat{\nu}(\frac{1+r}{2})$ for all $0\le r<1$ and some $C=C(\nu)>0$, then we write $\nu \in \DD$.
Correspondingly, $\nu \in \DDD$ if there exist $K=K(\nu)>1$ and $C=C(\nu)>1$ such that
    \begin{equation*}\label{Eq:definition-DDD}
    \begin{split}
    \widehat{\nu}(r)\ge C \widehat{\nu}\left(1-\frac{1-r}{K}\right), \quad 0\le r<1.
    \end{split}
    \end{equation*}
Class $\RR$ is the intersection of $\DD$ and $\DDD$.
In addition, we define the following subclass of $\DD$: $\nu \in \DD_p$ for some $p\in (0,\infty)$ if the condition
    \begin{equation} \label{Eq:DDp}
    \DD_p(\nu)=\sup_{0\le r<1}\frac{(1-r)^p}{\widehat{\nu}(r)}\int_0^r\frac{\nu(s)}{(1-s)^p}\,ds<\infty
    \end{equation}
is satisfied.
As a concrete example, we mention that $\nu_1(z)=(1-|z|)^\a$ and $\nu_2(z)=(1-|z|)^\a\left(\log\frac{e}{1-|z|}\right)^\b$
for any $\b\in \R$ belong to $\RR \cap \DD_p$ if and only if $-1<\a<p-1$.
Additional information about weights can be found in \cite{Pelaez2014, PelRat2015, TwoweightII}.
Some basic properties are recalled also in Section~\ref{Sec:2}.

Define the weighted Besov space $\mathcal{B}_{\nu}^{p,q}$ by $\mathcal{B}_{\nu}^{p,q}=\{f:f'\in A_\nu^{p,q}\}\cap H^p$.
For $-1<\a<\infty$ and $\nu(z)=(1-|z|)^\a$, the notation $\mathcal{B}_{\a}^{p,q}$ is used for $\mathcal{B}_{\nu}^{p,q}$.
The space $\mathcal{B}_{\nu}^{p,q}$ is the main research objective of this note.
Hence it is worth pointing out that the definition is rational,
which means that $H^p$ is not a subset of $\{f:f'\in A_\nu^{p,q}\}$ in general, or conversely.
The family of Blaschke products offers examples for the case where
$f\in H^\infty$ and $f'\notin A_\nu^{p,q}$; see for instance \cite{RS2018}.
Moreover, it would be natural that certain lacunary series $g$ lie out of $H^p$, while $g'\in A_\nu^{p,q}$.
Arguments for this kind of examples can be found in M.~Pavlovi\'c's book \cite{Pavlovic2014}, which contains
numerous important observations on the topic of this note.
The existence of both examples, of course, depends on $p$, $q$ and $\nu$.
In other words, under certain hypotheses for $p$, $q$ and $\nu$,
an inclusion relation between $\{f:f'\in A_\nu^{p,q}\}$ and $H^p$ might be valid.
However, this is not the case in general.

For $0<p<\infty$ and $f\in L^p(\T)$, the $L^p$ modulus of continuity $\om_p(t,f)$ is defined by
    $$\om_p(t,f)=\sup_{0<h<t} \left(\int_0^{2\pi} |f(e^{i(\t+h)})-f(e^{i\t})|^p \,d\t\right)^{1/p}, \quad 0<t\le 2\pi.$$
We interpret $\om_p(t, f)=\om_p(2\pi, f)$ for $t>2\pi$. It is a well-known fact that, for $0<p,q<\infty$, $-1<\a<q-1$ and $f\in H^p$,
the derivative of $f$ belongs to $A_\a^{p,q}$ if and only if
    $$\int_0^\infty \frac{\om_p(t,f)^q}{t^{q-\a}}\,dt<\infty.$$
This result originates from E.~M.~Stein's book \cite[Chapter V, Section 5]{Stein1970},
and the complete version is a consequence of \cite[Theorems~2.1~and~5.1]{Pavlovic1992} or \cite[Theorem~1.2]{JP2013} by M.~Pavlovi\'c and M.~Jevti\'c.
Our first theorem is a partial generalization of the result.
Its proof uses some ideas from \cite{Duren1970, PR2016, R2017b}.

\begin{theorem}\label{Thm1}
Let $1\le p<\infty$, $0<q<\infty$ and $\nu \in \RR$. Then $\nu \in \DD_q$ if and only if
there exists a constant $C=C(p,q,\nu)>0$ such that
    \begin{equation} \label{Eq:SP}
    \int_{1/2}^1 \om_p(1-r, f)^q \frac{\nu(r)}{(1-r)^q}\,dr \le C \|f'\|_{A_\nu^{p,q}}^q
    \end{equation}
for all $f\in H^p$.
\end{theorem}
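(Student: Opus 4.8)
\noindent The plan is to reduce the ``if'' direction to a weighted Hardy-type inequality, and to obtain the ``only if'' direction by testing \eqref{Eq:SP} against an explicit family. First I would record that, for $1\le p<\infty$,
\[
\om_p(1-r,f)\lesssim (1-r)M_p(r,f')+\int_r^1 M_p(\rho,f')\,d\rho,\qquad f\in H^p,
\]
obtained by splitting $f=f_r+(f-f_r)$, $f_r(z)=f(rz)$: estimating $f_r'$ along an arc gives $\om_p(1-r,f_r)\lesssim(1-r)M_p(r,f')$, and $\om_p(1-r,f-f_r)\le 2\|f-f_r\|_{L^p}\lesssim\int_r^1 M_p(\rho,f')\,d\rho$ by estimating $f'$ along a radius, both by Minkowski's integral inequality --- the only point where $p\ge1$ is used. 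Since $\rho\mapsto M_p(\rho,f')$ is non-decreasing (Hardy's convexity theorem), raising this to the power $q$
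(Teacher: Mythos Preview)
Your approach is correct and matches the paper's: both reduce to the pointwise bound $\om_p(1-r,f)\lesssim\int_r^1 M_p(\rho,f')\,d\rho$ (your term $(1-r)M_p(r,f')$ is absorbed by monotonicity of $M_p$), followed by the weighted Hardy inequality $\int_{1/2}^1\bigl(\int_t^1 M_p(s,f')\,ds\bigr)^q\nu(t)(1-t)^{-q}\,dt\lesssim\|f'\|_{A_\nu^{p,q}}^q$ that $\nu\in\DD_q$ furnishes, while necessity of $\DD_q$ is obtained by testing against monomials $f_n(z)=z^n$. One small slip: your ``if''/``only if'' labels are reversed relative to the standard reading of the statement---the Hardy step is the ``only if'' direction and the testing is the ``if'' direction.
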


Note that \eqref{Eq:SP} is valid also if $0<p,q<\infty$, $\nu$ is a weight and there exists $\b=\b(q,\nu)<q-1$ such that $\nu(r)/(1-r)^{\b}$
is increasing for $0\le r<1$.
This is due to \cite[Theorem~5.1]{Pavlovic1992} and its proof.
Even though the result is valid also for $0<p<1$, Theorem~\ref{Thm1} is more useful for our purposes.
In particular, it is worth underlining that the hypothesis $1\le p<\infty$ in Theorems~\ref{Thm2}~and~\ref{Thm3} below is natural.

Theorem~\ref{Thm2} gives a practical estimate for $\|f'\|_{A_\nu^{p,q}}^q+\|f\|_{H^p}^q$ when $f\in H^p$.
As a by-product of its argument, we deduce that also the converse inequality of \eqref{Eq:SP} holds
if the norm $\|f\|_{H^p}^q$ is added into the right-hand side.
Setting
    $$d\mu_{z}(\t)=\frac{1-|z|^2}{|e^{i\t}-z|^2}\frac{d\t}{2\pi}, \quad z\in \D, \quad 0\le \t<2\pi,$$
Theorem~\ref{Thm2} reads as follows.

\begin{theorem}\label{Thm2}
Let $1\le p<\infty$, $0<q<\infty$ and $\nu \in \RR \cap \DD_q$.
Then there exist positive constants $C_1$ and $C_2$ depending only on $p$, $q$ and $\nu$ such that
    \begin{equation}\label{Eq:SP3}
    \begin{split}
    \|f'\|_{A_\nu^{p,q}}^q& \le \int_0^1 \left(\int_0^{2\pi} \left(\int_0^{2\pi} |f(e^{i\t})-f(re^{it})| d\mu_{re^{it}}(\t) \right)^p dt\right)^{q/p} \frac{\nu(r)}{(1-r)^q}\,dr \\
    &\le C_1 \left(\int_0^1 \om_p(1-s, f)^q \frac{\nu(s)}{(1-s)^{q}}\,ds + \|f\|_{H^p}^q \right) \\
    &\le C_2 \left (\|f'\|_{A_\nu^{p,q}}^q+\|f\|_{H^p}^q \right )
    \end{split}
    \end{equation}
for all $f\in H^p$.
\end{theorem}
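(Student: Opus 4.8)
\textbf{Proof proposal for Theorem~\ref{Thm2}.}
The plan is to prove the three inequalities in \eqref{Eq:SP3} separately, going from left to right. The middle quantity is the natural bridge: the inner double integral is a Poisson-type averaging of the boundary oscillation of $f$, and it should be controlled above by the $L^p$ modulus of continuity and below by $|f'|$ via subharmonicity of $|f'|^p$.

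\emph{Step 1 (leftmost inequality).} Since $f\in H^p$ and $1\le p<\infty$, the Poisson integral recovers $f$ from its boundary values, so for $z=re^{it}\in\D$ we may write
    \begin{equation*}
    f(z)-f(re^{it})=\int_0^{2\pi}\big(f(e^{i\t})-f(re^{it})\big)\,d\mu_{re^{it}}(\t)
    \end{equation*}
only after differentiating; more precisely, differentiating the Poisson representation in $z$ and estimating the kernel, one gets a pointwise bound of the form $|f'(re^{it})|\lesssim \frac{1}{1-r}\int_0^{2\pi}|f(e^{i\t})-f(re^{it})|\,d\mu_{re^{it}}(\t)$, where the factor $(1-r)^{-1}$ comes from $\partial_z P(z,\t)$ and is exactly the weight appearing in the second line of \eqref{Eq:SP3}. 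Raising to the power $p$, integrating in $t$, taking the $q/p$-th power and integrating against $\nu(r)\,dr$ yields the first inequality; one should be slightly careful to absorb the constant coming from the kernel derivative and to handle $r$ near $0$ trivially (the integrand is bounded there by $\|f\|_{H^p}^q$ times a constant). This is essentially the argument of \cite{PR2016, R2017b}, and I expect no real difficulty here beyond bookkeeping.

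\emph{Step 2 (middle inequality).} Fix $r$ and $t$. By Jensen's inequality applied to the probability measure $\mu_{re^{it}}$, $\left(\int_0^{2\pi}|f(e^{i\t})-f(re^{it})|\,d\mu_{re^{it}}(\t)\right)^p\le \int_0^{2\pi}|f(e^{i\t})-f(re^{it})|^p\,d\mu_{re^{it}}(\t)$. Then I would integrate in $t$, use Fubini, and bound $\int_0^{2\pi}|f(re^{it})-f(re^{i\t})|^p\,dt$ together with $\int_0^{2\pi}|f(e^{i\t})-f(re^{i\t})|^p\,d\t$ separately: the first is a radial-dilation oscillation, controlled by $\om_p(\cdot,f)$ plus a tail, and the second by $\|f-f_r\|_{L^p}^p\lesssim \om_p(1-r,f)^p$ (a standard Hardy-space estimate: the difference between $f$ on $\T$ and its dilate $f_r$ has $L^p$ norm comparable to $\om_p(1-r,f)$, up to an additive $\|f\|_{H^p}$ term absorbed in the statement). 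Summing the contributions, taking $q/p$-th powers (using $1\le p<\infty$ to keep Minkowski-type inequalities available, and splitting the cases $q\ge p$ and $q<p$ in the usual way), and integrating against $\nu(r)/(1-r)^q\,dr$ gives the bound by $C_1(\int_0^1\om_p(1-s,f)^q\nu(s)/(1-s)^q\,ds+\|f\|_{H^p}^q)$. The $\|f\|_{H^p}^q$ term collects all the additive constants and the contribution near $r=0$.

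\emph{Step 3 (rightmost inequality).} This is where I expect the main obstacle, and it is exactly the place where $\nu\in\RR\cap\DD_q$ is used. Theorem~\ref{Thm1} already gives $\int_{1/2}^1\om_p(1-r,f)^q\nu(r)/(1-r)^q\,dr\le C\|f'\|_{A_\nu^{p,q}}^q$; the piece over $[0,1/2]$ is harmless since there $\om_p(1-r,f)\le\om_p(1,f)\le 2\|f\|_{H^p}$ and $\nu(r)/(1-r)^q$ is integrable on $[0,1/2]$. Hence the middle line is $\le C_2(\|f'\|_{A_\nu^{p,q}}^q+\|f\|_{H^p}^q)$, which is the last inequality. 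The real content of Step~3 is therefore just invoking Theorem~\ref{Thm1} in the direction ``$\nu\in\DD_q\Rightarrow$ \eqref{Eq:SP}'', and the remark after Theorem~\ref{Thm1} shows the converse of \eqref{Eq:SP} with the added $\|f\|_{H^p}^q$ falls out of Steps~1--2 combined, which is the promised by-product. The only delicate point is to make sure the constants in Steps~1 and~2 genuinely depend only on $p,q,\nu$ and not on $f$, which they do because every estimate used (Jensen, Fubini, the Poisson-kernel derivative bound, and the modulus-of-continuity comparison) is uniform in $f\in H^p$.
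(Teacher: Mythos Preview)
Your Step~2 contains a genuine gap. The claim that $\|f-f_r\|_{L^p}\lesssim\om_p(1-r,f)$ (even up to an additive $\|f\|_{H^p}$) is not a standard Hardy-space estimate and in fact fails as a pointwise-in-$r$ bound: because the Poisson kernel has a heavy tail $P_r(\t)\asymp(1-r)/\t^2$ for $|\t|\gtrsim 1-r$, the difference $f-f_r=f-P_r\ast f$ picks up contributions from \emph{all} scales $\ge 1-r$, and what one actually obtains is Dyakonov's estimate
\[
\mathcal{I}(r)^{1/q}\;\lesssim\;\sum_{k=0}^\infty 2^{-k}\,\om_p\big(2^k(1-r),f\big),
\]
exactly as in~\eqref{Eq:thm3-dyakonov-esti}. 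This sum does \emph{not} collapse to $\om_p(1-r,f)$: using $\om_p(2^kt,f)\le 2^k\om_p(t,f)$ gives a divergent series, and even the analytic route $\|f-f_r\|_{L^p}\le\int_r^1 M_p(s,f')\,ds$ combined with $M_p(s,f')\lesssim\om_p(1-s,f)/(1-s)$ leaves a factor $\log\frac{1}{1-r}$. Your Jensen--Fubini split therefore still lands on the Dyakonov sum (for both pieces of your decomposition), and the substantive work---which is most of the paper's proof---is to integrate that sum against $\nu(r)(1-r)^{-q}\,dr$ and recover the single-scale quantity $\int_0^1\om_p(1-s,f)^q\nu(s)(1-s)^{-q}\,ds$. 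The paper does this by first passing from $\nu$ to $\widehat{\nu}(r)/(1-r)$ via a dyadic-partition argument (the manoeuvre in~\eqref{Eq:thm3-om-scal}), then splitting the $k$-sum at $2^k(1-r)=1$, controlling the near part through~\eqref{Eq:DDp-hat} (a reformulation of $\nu\in\DD_q$), and extracting geometric decay $\widehat{\nu}\big(1-2^{-k}(1-s)\big)\lesssim 2^{-\a k}\widehat{\nu}(s)$ on the far part from the $\DDD$ condition (Lemma~\ref{Lemma:DDD}). The cases $q\le1$ and $q>1$ are treated separately (not $q\gtrless p$ as you suggest). None of this is bookkeeping; it is precisely where the hypothesis $\nu\in\RR\cap\DD_q$ is consumed in the middle inequality, whereas in your outline the weight hypotheses play no role in Step~2 at all.

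A smaller remark on Step~1: the first inequality in~\eqref{Eq:SP3} is stated with constant~$1$, and the paper achieves this exactly via Cauchy's formula together with $\int_0^{2\pi}e^{i\t}(e^{i\t}-z)^{-2}\,d\t=0$, which lets one subtract $f(z)$ for free and then identify the modulus of the Cauchy kernel with the Poisson kernel. Differentiating the Poisson representation as you propose would introduce an absolute constant larger than~$1$ there.
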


By studying the classical weight $\nu(z)=(1-|z|)^\a$, where $-1<\a<q-1$,
we obtain K.~M.~Dyakonov's \cite[Proposition~2.2(a)]{Dyakonov1998} as a direct consequence of Theorem~\ref{Thm2}.
Hence it does not come as a surprise that the proofs of \cite[Theorem~2.1]{Dyakonov1998} and Theorem~\ref{Thm2} have some similarities.
Nonetheless, it is worth mentioning that the presence of general weights complicates the argument;
and consequently, our proof is quite technical. Note also that Theorem~\ref{Thm1} plays an essential role in the proof.

Our main result below gives a characterization for functions $f$ in $\mathcal{B}_{\nu}^{p,q}$ depending only $|f|$, $p$, $q$ and $\nu$.
This result improves B.~B\o e's \cite[Theorem~1.1]{Boe2003}, which concentrates only on the case where $1\le p,q<\infty$, $-1<\a<q-1$ and $\nu(z)=(1-|z|)^\a$.
It also generalizes the essential contents of \cite[Proposition~2.4]{Aleman1992} and \cite[Proposition~2.2(b)]{Dyakonov1998} made by A.~Aleman and K.~M.~Dyakonov, respectively.

\begin{theorem}\label{Thm3}
Let $1\le p<\infty$, $0<q<\infty$ and $\nu \in \RR \cap \DD_q$.
Then there exist positive constants $C_1$ and $C_2$ depending only on $p$, $q$ and $\nu$ such that
    \begin{equation} \label{Eq:SP4}
    \begin{split}
    &\|f'\|_{A_\nu^{p,q}}^q \le C_1\left(F_1(f)+F_2(f)\right) \le C_2\left(\|f'\|_{A_\nu^{p,q}}^q + \|f\|_{H^p}^q\right), \quad f\in H^p,
    \end{split}
    \end{equation}
where
    \begin{equation*}
    \begin{split}
    F_1(f)=\int_0^1 \left(\int_0^{2\pi} \left(\int_0^{2\pi} |f(e^{i\t})| d\mu_{re^{it}}(\t)-|f(re^{it})|\right)^p dt\right)^{q/p} \frac{\nu(r)}{(1-r)^q}\,dr
    \end{split}
    \end{equation*}
and
    \begin{equation*}
    \begin{split}
    F_2(f)=\int_0^1 \left(\int_0^{2\pi} \left(\int_0^{2\pi} \left||f(e^{i\t})|-\int_0^{2\pi} |f(e^{is})| d\mu_{re^{it}}(s)\right| d\mu_{re^{it}}(\t) \right)^p dt\right)^{q/p} \frac{\nu(r)}{(1-r)^q}\,dr.
    \end{split}
    \end{equation*}
\end{theorem}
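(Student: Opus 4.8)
The plan is to prove the two inequalities in \eqref{Eq:SP4} separately. Throughout write $z=re^{it}$ and $P[|f|](z)=\int_0^{2\pi}|f(e^{i\t})|\,d\mu_z(\t)$, and set
\[
u(z)=P[|f|](z)-|f(z)|,\qquad v(z)=\int_0^{2\pi}\bigl||f(e^{i\t})|-P[|f|](z)\bigr|\,d\mu_z(\t),
\]
which are exactly the inner expressions occurring in $F_1$ and $F_2$; thus $F_1(f)=\int_0^1\bigl(\int_0^{2\pi}u(re^{it})^p\,dt\bigr)^{q/p}\frac{\nu(r)}{(1-r)^q}\,dr$ and likewise for $F_2$ with $v$. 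The left inequality in \eqref{Eq:SP4} will be deduced from the pointwise estimate
\begin{equation*}
(1-|z|^2)\,|f'(z)|\le 4\bigl(u(z)+v(z)\bigr),\qquad z\in\D,
\end{equation*}
which I regard as the core of the theorem; the right inequality is comparatively soft and will follow from Theorem~\ref{Thm2}.

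To prove the pointwise estimate one normalizes by composing with $\phi_z(w)=\frac{z+w}{1+\bar z w}$: for $f\not\equiv0$ put $g=f\circ\phi_z\in H^1$ (this is where the hypothesis $p\ge1$ enters, so that $f\in H^1$). Then $(1-|z|^2)f'(z)=g'(0)$, which equals the first Fourier coefficient $\int_0^{2\pi}g(e^{i\t})e^{-i\t}\,\frac{d\t}{2\pi}$; and since $\phi_z$ transports $\frac{d\t}{2\pi}$ to $d\mu_z$, the numbers $\lambda:=\int_0^{2\pi}|g(e^{i\t})|\,\frac{d\t}{2\pi}=P[|f|](z)$, $u(z)=\lambda-|g(0)|$ and $v(z)=\int_0^{2\pi}\bigl||g(e^{i\t})|-\lambda\bigr|\,\frac{d\t}{2\pi}$ are the arithmetic mean of $|g|$ on $\T$, the defect $\lambda-|g(0)|$, and the mean oscillation of $|g|$. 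Write $g=O_gI_g$ (outer times inner) and use $|O_g(0)|\le\lambda$ (arithmetic--geometric mean) and $|O_g(0)|\ge|g(0)|$: the inner factor contributes, by the Schwarz--Pick estimate $|I_g'(0)|\le1-|I_g(0)|^2\le2(1-|I_g(0)|)$, $\ |O_g(0)||I_g'(0)|\le2\bigl(|O_g(0)|-|g(0)|\bigr)\le2u(z)$; the outer factor contributes $|O_g'(0)|=2|O_g(0)|\bigl|\int_0^{2\pi}\log(|g(e^{i\t})|/\lambda)\,e^{-i\t}\,\frac{d\t}{2\pi}\bigr|\le2u(z)+4v(z)$, the last bound coming from $\log x\le x-1$ on $\{|g|\ge\lambda\}$, which handles that part by $v(z)/\lambda$, and from writing the integral over $\{|g|<\lambda\}$ as $\log(\lambda/|O_g(0)|)$ minus its part over $\{|g|\ge\lambda\}$ and using $\log x\le x-1$ together with the arithmetic--geometric-mean defect $\lambda-|O_g(0)|\le\lambda-|g(0)|=u(z)$. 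Adding the two contributions gives $(1-|z|^2)|f'(z)|=|g'(0)|\le4u(z)+4v(z)$.

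Granting this, the left inequality of \eqref{Eq:SP4} is immediate: $M_p(r,f')^p=\frac1{2\pi}\int_0^{2\pi}|f'(re^{it})|^p\,dt\le\frac{4^p}{(1-r^2)^p}\cdot\frac1{2\pi}\int_0^{2\pi}\bigl(u(re^{it})+v(re^{it})\bigr)^p\,dt\lesssim\frac1{(1-r)^p}\int_0^{2\pi}\bigl(u(re^{it})^p+v(re^{it})^p\bigr)\,dt$, so raising to the power $q/p$, integrating against $\nu(r)\,dr$, and using $(A+B)^{q/p}\lesssim A^{q/p}+B^{q/p}$ we get $\|f'\|_{A_\nu^{p,q}}^q\le\int_0^1 M_p(r,f')^q\nu(r)\,dr\lesssim F_1(f)+F_2(f)$ — with no Hardy-norm term, and in fact for any weight $\nu$. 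For the right inequality, recall $f(z)=\int_0^{2\pi}f(e^{i\t})\,d\mu_z(\t)$ and set $J(z)=\int_0^{2\pi}|f(e^{i\t})-f(z)|\,d\mu_z(\t)$, whose weighted $L^p$--$L^q$ mean $\int_0^1\bigl(\int_0^{2\pi}J(re^{it})^p\,dt\bigr)^{q/p}\frac{\nu(r)}{(1-r)^q}\,dr$ is precisely the middle quantity in Theorem~\ref{Thm2}. The triangle inequality gives $u(z)\le\int_0^{2\pi}\bigl||f(e^{i\t})|-|f(z)|\bigr|\,d\mu_z\le J(z)$ and $v(z)\le J(z)+u(z)\le2J(z)$ pointwise, hence $F_1(f)+F_2(f)\le(1+2^q)\int_0^1\bigl(\int_0^{2\pi}J(re^{it})^p\,dt\bigr)^{q/p}\frac{\nu(r)}{(1-r)^q}\,dr\le C\bigl(\|f'\|_{A_\nu^{p,q}}^q+\|f\|_{H^p}^q\bigr)$ by Theorem~\ref{Thm2} (this is where $\nu\in\RR\cap\DD_q$ is used). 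Relabeling constants finishes the proof.

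The delicate step is the bound for $O_g'(0)$, i.e.\ for the first Fourier coefficient of $\log|g|$ on $\T$: a naive estimate by the mean oscillation of $|g|$ cannot work, because $\log$ is not Lipschitz near the origin, so a small $L^1$ oscillation of $|g|$ need not force a small $L^1$ oscillation of $\log|g|$. The remedy is to pay for the genuinely logarithmic part of $\log(|g|/\lambda)$ on $\{|g|<\lambda\}$ with the arithmetic--geometric-mean defect $\lambda-|O_g(0)|$, which is controlled by $u(z)$ exactly because $|O_g(0)|\ge|g(0)|$; once this is isolated the remainder is routine manipulation of the two identities for $(f\circ\phi_z)'(0)$ and an appeal to Theorem~\ref{Thm2}.
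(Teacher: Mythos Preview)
Your proof is correct and follows essentially the same route as the paper. Both arguments establish the pointwise bound $(1-|z|)|f'(z)|\lesssim u(z)+v(z)$ via the inner--outer factorization, the Schwarz--Pick lemma for the inner factor, and a logarithmic estimate for the outer derivative (you derive it after the M\"obius normalization $g=f\circ\phi_z$, while the paper cites it as B\o e's Lemma~\ref{lemma:outer-upper}); and both bound $u,v\lesssim J$ pointwise and invoke Theorem~\ref{Thm2} for the reverse inequality, the only difference being that the paper uses the level-set $\Gamma$ argument where you use the triangle inequality $v\le J+u$.
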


Before we talk about the argument of Theorem~\ref{Thm3}, recall the inner-outer factorization.
An inner function is a member of $H^\infty$ having unimodular radial limits almost everywhere on $\T$.
For $0<p\le \infty$, an outer function for $H^p$ takes the form
    $$O_\phi(z)=\exp\left(\frac{1}{2\pi}\int_0^{2\pi}  \frac{e^{i\t}+z}{e^{i\t}-z}\log \phi(e^{i\t})\,d\t\right), \quad z\in \D,$$
where $\phi$ is a non-negative function in $L^p(\T)$ and $\log \phi \in L^1(\T)$.
The inner-outer factorization asserts that $f\in H^p$ can be represented as the product of an inner and outer function; see for instance \cite[Theorem~2.8]{Duren1970}.
It is worth noting that the factorization is unique, and
    \begin{equation}\label{Eq:inner-outer-boundary}
    \begin{split}
    |f(\xi)|=|O_\phi(\xi)|=\phi(\xi)
    \end{split}
    \end{equation}
for almost every $\xi \in \T$ if $O_\phi$ is the outer function from the factorization of $f$.
Equation \eqref{Eq:inner-outer-boundary} is due to the definition of inner functions,
Poisson integral formula, harmonicity of $\log |O_{\phi}(z)|$ and fact that
    $$|O_\phi(z)|=\exp\left(\int_0^{2\pi} \log \phi(e^{i\t})\,d\mu_z(\t)\right), \quad z\in \D.$$

The last inequality in \eqref{Eq:SP4} can be proved by applying Theorem~\ref{Thm2}.
In the argument of the first inequality, the inner-outer factorization, Schwarz-Pick lemma
and an upper estimate for $|O_\phi'|$ from \cite{Boe2003} are the main tools.
It is worth underlining that this B\o e's idea to make an upper estimate for $|f'|$
by using the factorization seems to be quite effective.
Another way to prove results like Theorem~\ref{Thm3} is to use a modification of Theorem~\ref{Thm2} together with the well-known equation
    \begin{equation*}
    \begin{split}
    \int_0^{2\pi} |f(e^{i\t})-f(z)|^2 d\mu_z(\t)=\int_0^{2\pi} |f(e^{i\t})|^2 d\mu_z(\t)-|f(z)|^2, \quad z\in\D;
    \end{split}
    \end{equation*}
but this Dyakonov's method has the obvious defect that it works only when $f\in H^2$.
The advantage of this method in the case where $2\le p<\infty$, $0<q<\infty$, $q/2-1<\a<q-1$ and $\nu(z)=(1-|z|)^\a$ is that $F_1(f)+F_2(f)$ in Theorem~\ref{Thm3}
can be replaced by
    \begin{equation*}
    \begin{split}
    \int_0^1 \left(\int_0^{2\pi} \left(\int_0^{2\pi} |f(e^{i\t})|^2 d\mu_{re^{it}}(\t)-|f(re^{it})|^2\right)^{p/2} dt\right)^{q/p} \frac{\nu(r)}{(1-r)^q}\,dr;
    \end{split}
    \end{equation*}
see \cite[Proposition~2.2(b)]{Dyakonov1998}.
It is an open problem to prove a corresponding estimate for general weights.

Next we give an example which shows that the hypothesis $\nu \in \RR \cap \DD_q$
in Theorem~\ref{Thm3} for $p\ge 2$ is sharp in a certain sense.
Note that the example is a modification of \cite[Example~8]{R2017b}.
Before the statement we fix some notation. Write
$f\lesssim g$ if there exists a constant $C>0$ such that $f\le Cg$, while $f\gtrsim g$ is understood analogously.
If $f\lesssim g$ and $f\gtrsim g$, then we write $f\asymp g$.

\begin{example}\label{Example1}
Let $2\le p<\infty$, $q=p$, $\nu(z)=(1-|z|)^{p-1}$ and $dA(z)$ be the two-dimensional Lebesgue measure $dxdy$.
Let $f$ be an inner function such that
    $$\int_{\{z\in \D:|f(z)|<\e\}}\frac{dA(z)}{1-|z|}=\infty$$
for some $\e\in (0,1)$. The existence of such $f$ is guaranteed by \cite[Theorem~5]{Borichev2013}.
Then
    \begin{equation*}
    \begin{split}
    F_1(f)+F_2(f)=F_1(f)\asymp \int_{\D} (1-|f(z)|)^p (1-|z|)^{-1}\,dA(z)=\infty,
    \end{split}
    \end{equation*}
while
    \begin{equation*}
    \begin{split}
    \|f'\|_{A_\nu^{p,q}}^q + \|f\|_{H^p}^q=\|f'\|_{A_{p-1}^{p,p}}^p+1<\infty
    \end{split}
    \end{equation*}
by the well-known inclusion
    $$H^p \subset \{g:g'\in A_{p-1}^{p,p}\}, \quad 2\le p<\infty,$$
which originates from \cite{LP1936}.
\end{example}

We close the section by explaining how the remainder of this note is organized.
Auxiliary results on weights are recalled in the next section.
The utility of Theorem~\ref{Thm3} is demonstrated in Sections~\ref{Sec:3}~and~\ref{Sec:4}.
More precisely, in Section~\ref{Sec:3}, we prove the factorization which states that, for any $f\in \mathcal{B}_{\nu}^{p,q}$,
there exist $f_1,f_2\in \mathcal{B}_{\nu}^{p,q} \cap H^\infty$ such that $f=f_1/f_2$.
Section~\ref{Sec:4} begins with a result giving a sufficient and necessary condition guaranteeing that the product of
$f\in H^p$ and an inner function belongs to $\mathcal{B}_{\nu}^{p,q}$.
As a consequence of this theorem, we obtain some results on zero sets of $\mathcal{B}_{\nu}^{p,p}$.
Sections~\ref{Sec:5},~\ref{Sec:6}~and~\ref{Sec:7} consist of the proofs of Theorems~\ref{Thm1},~\ref{Thm2}~and~\ref{Thm3}, respectively.

\section{Auxiliary results on weights}\label{Sec:2}

In this section, we recall some basic properties of weights in $\DD$ and $\DDD$.
These properties are needed in next sections.
Another reason for these results is to help the reader to understand the nature of weights in $\RR$.
We begin with a result which is essentially \cite[Lemma~3]{PelRat2015}; see also \cite[Lemma~2.1]{Pelaez2014}.

\begin{letterlemma}\label{Lemma:DD}
Let $\nu$ be a weight. Then the following statements are equivalent:
\begin{itemize}
    \item[(i)] $\nu\in \DD$.
    \item[(ii)] There exist $C=C(\nu)>0$ and $\b=\b(\nu)>0$ such that
        \begin{equation*}
        \whw(r)\le C\left(\frac{1-r}{1-s}\right)^\b \whw(s), \quad 0\le r\le s<1.
        \end{equation*}
    \item[(iii)] There exist $C=C(\nu)>0$ and $\g=\g(\nu)>0$ such that
        \begin{equation*}
        \int_0^r \left(\frac{1-r}{1-s}\right)^\g\nu(s)\,ds\le C\whw(r), \quad 0\le r<1.
        \end{equation*}
    \item[(iv)] The estimate
        $$\int_0^1 s^x\nu(s)\,ds \asymp \whw\left(1-\frac{1}{x}\right), \quad 1\le x<\infty,$$
        is satisfied.
\end{itemize}
\end{letterlemma}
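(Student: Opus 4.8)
The plan is to prove the cycle of implications $(i)\Rightarrow(iii)\Rightarrow(ii)\Rightarrow(i)$ together with the equivalence $(ii)\Leftrightarrow(iv)$ (or, more economically, $(iii)\Rightarrow(iv)\Rightarrow(ii)$), so that all four statements become equivalent. The underlying principle is that the defining doubling inequality $\widehat\nu(r)\le C\widehat\nu(\frac{1+r}{2})$, when iterated over the dyadic-type intervals $1-(1-r)2^{-n}$, produces the geometric-decay estimates in (ii) and (iii); conversely each of those clearly forces the single-step doubling in $(i)$ by specialization. I would set up the notation $r_n=1-2^{-n}(1-r)$ once at the start, observe $\widehat\nu(r_0)=\widehat\nu(r)$ and $\frac{1-r_n}{1-r_{n+1}}=2$, and carry everything through in terms of this sequence.

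First I would prove $(i)\Rightarrow(ii)$. Given $0\le r\le s<1$, choose the integer $n\ge 0$ with $r_n\le s<r_{n+1}$, i.e. $2^{-n-1}<\frac{1-s}{1-r}\le 2^{-n}$; since $\widehat\nu$ is nonincreasing, iterating $(i)$ $n+1$ times gives $\widehat\nu(r)\le C^{n+1}\widehat\nu(r_{n+1})\le C^{n+1}\widehat\nu(s)$. Writing $C^{n+1}=2^{(n+1)\log_2 C}$ and using $2^{n}<2\,\frac{1-r}{1-s}$ yields $\widehat\nu(r)\le C'\bigl(\frac{1-r}{1-s}\bigr)^{\b}\widehat\nu(s)$ with $\b=\log_2 C$, which is $(ii)$. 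The implication $(ii)\Rightarrow(i)$ is immediate on taking $s=\frac{1+r}{2}$, so $(1-r)/(1-s)=2$ and the constant is $2^{\b}$.

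Next, $(ii)\Rightarrow(iii)$ is a direct integration: pick any $\g>\b$, split $\int_0^r$ over the intervals $[r_{n+1},r_n]\cap[0,r]$ for $n\ge 0$ (i.e. dyadically approaching $r$ from the left, reindexing so the intervals shrink toward $r$), and on each such interval use $\bigl(\frac{1-r}{1-s}\bigr)^{\g}\le 2^{-(n)\g}\cdot\text{const}$ while $\int \nu(s)\,ds$ over that interval is at most $\widehat\nu$ evaluated at its left endpoint, which by $(ii)$ is $\lesssim 2^{n\b}\widehat\nu(r)$; summing the geometric series $\sum 2^{n(\b-\g)}<\infty$ gives $(iii)$. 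For $(iii)\Rightarrow(i)$, restrict the integral in $(iii)$ to $s\in[\frac{1+r}{2},r]$—wait, that is empty; instead restrict to $s\in[r,\frac{1+r}{2}]$ after noting $(iii)$ with $r$ replaced by $\frac{1+r}{2}$: on $s\in[r,\frac{1+r}{2}]$ one has $\frac{1-\frac{1+r}{2}}{1-s}\asymp 1$, so $\int_r^{(1+r)/2}\nu(s)\,ds\lesssim \widehat\nu(\frac{1+r}{2})$; combining with $\widehat\nu(r)=\int_r^{(1+r)/2}\nu+\widehat\nu(\frac{1+r}{2})$ gives $\widehat\nu(r)\lesssim\widehat\nu(\frac{1+r}{2})$, which is $(i)$.

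Finally, the equivalence with $(iv)$. For $(ii)\Rightarrow(iv)$: writing $x\ge 1$ and $r_x=1-1/x$, split $\int_0^1 s^x\nu(s)\,ds$ at $r_x$. On $[0,r_x]$ use $s^x\le 1$ and $\int_0^{r_x}\nu\le\widehat\nu(0)$—too crude; rather dyadically decompose $[0,r_x]$ as above and use $s^x\le e^{-2^{n}}\cdot$const on the $n$-th piece together with $(ii)$ (a convergent sum), while on $[r_x,1]$ use $s^x\le 1$ to get $\le\widehat\nu(r_x)$; the lower bound $\int_0^1 s^x\nu\,ds\ge\int_{r_x}^{r_{2x}}s^x\nu\,ds\gtrsim\widehat\nu(r_x)-\widehat\nu(r_{2x})\gtrsim\widehat\nu(r_x)$ uses $(ii)$ once more to absorb $\widehat\nu(r_{2x})$. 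For the converse $(iv)\Rightarrow(ii)$ (or straight to $(i)$), one uses $\widehat\nu(1-\tfrac1x)\asymp\int_0^1 s^x\nu\,ds$ and the elementary fact that $x\mapsto\int_0^1 s^x\nu\,ds$ is log-convex and comparable at $x$ and $2x$ because $s^{2x}\le s^x$ and $s^{2x}\ge s^x\cdot s^x$... this needs a short Hardy-type argument.

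The main obstacle will be the step involving $(iv)$: translating between the Mellin-type quantity $\int_0^1 s^x\nu(s)\,ds$ and $\widehat\nu(1-1/x)$ in \emph{both} directions requires care with the contribution of $s$ near $0$ versus $s$ near $1$, and the reverse direction $(iv)\Rightarrow(i)$ is the least transparent—one cannot simply specialize an inequality but must extract single-step doubling from the two-point comparison at $x$ and $2x$. I expect to route this through the already-established $(i)\Leftrightarrow(ii)$ and prove only $(ii)\Leftrightarrow(iv)$, which keeps the bookkeeping manageable. Everything else is a routine dyadic decomposition plus summation of a geometric series.
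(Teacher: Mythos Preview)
The paper does not prove this lemma; it merely cites it as \cite[Lemma~3]{PelRat2015} (see also \cite[Lemma~2.1]{Pelaez2014}). So there is no in-paper argument to compare against, and your outline has to be judged on its own. Your cycle $(i)\Leftrightarrow(ii)\Leftrightarrow(iii)$ is the standard one and is essentially fine, though note that in $(ii)\Rightarrow(iii)$ the dyadic points must march \emph{away} from $r$ toward $0$ (take $s_n=1-2^{n}(1-r)$), not toward $1$ as your $r_n$ do; with that reindexing your geometric-series estimate goes through with any $\gamma>\beta$.

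The genuine gaps are both in the treatment of $(iv)$. First, in the lower bound for $(ii)\Rightarrow(iv)$ you write $\widehat\nu(r_x)-\widehat\nu(r_{2x})\gtrsim\widehat\nu(r_x)$ ``using $(ii)$''. This is false: $(ii)$ gives only $\widehat\nu(r_{2x})\gtrsim 2^{-\beta}\widehat\nu(r_x)$, a lower bound on $\widehat\nu(r_{2x})$, whereas you need an upper bound strictly below $\widehat\nu(r_x)$---that is the $\widecheck{\mathcal D}$ condition, which is not assumed. The fix is much simpler: for $x\ge2$ one has directly
\[
\int_0^1 s^x\nu(s)\,ds\ \ge\ \int_{1-1/x}^1 s^x\nu(s)\,ds\ \ge\ (1-1/x)^x\,\widehat\nu(1-1/x)\ \ge\ \tfrac14\,\widehat\nu(1-1/x),
\]
and for $1\le x\le2$ both sides are comparable to fixed positive constants once $(i)$ is available.

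Second, your $(iv)\Rightarrow(i)$ is not an argument: the log-convexity of $x\mapsto\int_0^1 s^x\nu\,ds$ only yields $m(x)^2\le m(0)\,m(2x)$, which does not give $m(x)\lesssim m(2x)$. The clean route is to use the \emph{upper-bound} half of $(iv)$ at the shifted parameter. Given $r\in[1/2,1)$, set $x=2/(1-r)$ so that $1-1/x=(1+r)/2$; then $(iv)$ gives
\[
\widehat\nu\!\left(\tfrac{1+r}{2}\right)\ \gtrsim\ \int_0^1 s^{2/(1-r)}\nu(s)\,ds\ \ge\ r^{2/(1-r)}\,\widehat\nu(r)\ \ge\ \tfrac{1}{16}\,\widehat\nu(r),
\]
since $r^{2/(1-r)}\ge(1/2)^4$ for $r\ge1/2$. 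The range $r\in[0,1/2]$ is handled by a single application of $(iv)$ at a fixed $x$ to see that $\widehat\nu(3/4)>0$, whence $\widehat\nu(r)/\widehat\nu((1+r)/2)\le\widehat\nu(0)/\widehat\nu(3/4)<\infty$. No Hardy-type inequality is needed.
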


For the point view of our main results Lemma~\ref{Lemma:DD}(iii) is interesting because it states that $\nu \in\DD$ if and only if $\nu \in \DD_{p}$ for some $p>0$.
This means that $\DD=\bigcup_{p>0} \DD_p$. Nevertheless, Lemma~\ref{Lemma:DD}(ii) gives maybe the most interesting description for $\DD$.
Together with its $\DDD$ counterpart below it offers a very practical characterization for weights in $\RR$.
Essentially this characterization says that $\whw$ is normal in the sense of A.~L.~Shields and D.~L.~Williams \cite{SW1971}.

\begin{letterlemma}\label{Lemma:DDD}
Let $\nu$ be a weight. Then $\nu \in \DDD$ if and only if there exist $C=C(\nu)>0$ and $\a=\a(\nu)>0$ such that
        \begin{equation*}
        \whw(s)\le C\left(\frac{1-s}{1-r}\right)^\a \whw(r), \quad 0\le r\le s<1.
        \end{equation*}
\end{letterlemma}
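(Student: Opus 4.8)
The plan is to establish both directions by the usual device of iterating the one-step condition defining $\DDD$ into a power-type estimate for $\widehat{\nu}$; this is completely parallel to the proof of Lemma~\ref{Lemma:DD}(ii), only the inequalities point the other way. Throughout one may harmlessly discard the trivial situation in which $\widehat{\nu}$ vanishes identically past some radius, since there both inequalities in play hold automatically.

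For the implication $\nu\in\DDD\Rightarrow$ estimate, fix constants $K>1$, $C>1$ witnessing $\nu\in\DDD$ and set $\alpha=\log C/\log K>0$. Given $0\le r\le s<1$, I would put $r_{0}=r$ and $r_{j+1}=1-\frac{1-r_{j}}{K}$, so that $1-r_{j}=\frac{1-r}{K^{j}}$ and the defining inequality applied at $r_{j}$ gives $\widehat{\nu}(r_{j})\ge C\,\widehat{\nu}(r_{j+1})$; iterating, $\widehat{\nu}(r)\ge C^{n}\widehat{\nu}(r_{n})$ for every $n\ge 0$. Choosing $n=\lfloor\log_{K}\tfrac{1-r}{1-s}\rfloor\ge 0$ one has $K^{n}\le\frac{1-r}{1-s}$, which is exactly $r_{n}\le s$, so monotonicity of $\widehat{\nu}$ yields $\widehat{\nu}(r)\ge C^{n}\widehat{\nu}(s)$. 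Since $n>\log_{K}\frac{1-r}{1-s}-1$ we get $C^{n}>C^{-1}\bigl(\tfrac{1-r}{1-s}\bigr)^{\alpha}$, and rearranging gives $\widehat{\nu}(s)\le C\bigl(\tfrac{1-s}{1-r}\bigr)^{\alpha}\widehat{\nu}(r)$, i.e.\ the asserted estimate with the same $C$ and exponent $\alpha$.

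For the converse, assume the power estimate holds with some $C,\alpha>0$. For a parameter $K>1$ still to be chosen, apply it with $s=1-\frac{1-r}{K}$ (legitimate since then $s\ge r$), noting $\frac{1-s}{1-r}=\frac{1}{K}$; this gives $\widehat{\nu}\bigl(1-\tfrac{1-r}{K}\bigr)\le C K^{-\alpha}\widehat{\nu}(r)$ for all $0\le r<1$. Taking $K=(2C)^{1/\alpha}$ makes $C K^{-\alpha}=\tfrac12$, hence $\widehat{\nu}(r)\ge 2\,\widehat{\nu}\bigl(1-\tfrac{1-r}{K}\bigr)$, which is precisely $\nu\in\DDD$ with constants $K$ and $2$.

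No genuine obstacle is expected here; the only points requiring a little care are the bookkeeping of the exponent $\alpha=\log C/\log K$ in the forward direction, the degenerate case $n=0$ (where the claim collapses to the monotonicity $\widehat{\nu}(s)\le\widehat{\nu}(r)$ used together with $\frac{1-r}{1-s}<K$), and making sure the choice of $K$ in the converse depends only on $C$ and $\alpha$, hence only on $\nu$.
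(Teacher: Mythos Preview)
Your proof is correct and is precisely the approach the paper indicates: the paper does not give a self-contained argument but only remarks that the lemma ``can be proved in a corresponding manner as Lemma~\ref{Lemma:DD}(ii)'', referring to \cite[Lemma~2.1]{Pelaez2014}, and your iteration of the one-step $\DDD$ condition into a power estimate (and its converse via a suitable choice of $K$) is exactly that standard argument.
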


Lemma~\ref{Lemma:DDD} originates from \cite{TwoweightII}, and it can be proved in a corresponding manner as Lemma~\ref{Lemma:DD}(ii).
See in particular the proof of \cite[Lemma~2.1]{Pelaez2014}.

By the definition of class $\DD_p$, it is clear that $\DD_p \subset \DD_{p+\e}$ for any $\e>0$.
Next we state \cite[Lemma~3]{PR2016}, which shows that also the converse inclusion is true for sufficiently small $\e=\e(\nu, p)>0$.
The proof of this result is based on integration by parts.
Note that $\DD_p(\nu)$ in the statement is defined by \eqref{Eq:DDp}.

\begin{letterlemma}\label{Lemma:self-imp}
If $0<p<\infty$ and $\nu\in \DD_p$, then $\nu\in \DD_{p-\e}$ for any $\e \in \left(0,\frac{p}{\DD_p(\nu)+1}\right)$.
\end{letterlemma}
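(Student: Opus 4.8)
The plan is to fix $0<p<\infty$, assume $\nu\in\DD_p$ with constant $\DD_p(\nu)$ as in \eqref{Eq:DDp}, and let $\e\in\bigl(0,\tfrac{p}{\DD_p(\nu)+1}\bigr)$. I want to bound
$$
(1-r)^{p-\e}\int_0^r\frac{\nu(s)}{(1-s)^{p-\e}}\,ds
$$
uniformly in $r$. The natural device is to write $\nu(s)=-\bigl(\widehat\nu\bigr)'(s)$ and integrate by parts in the integral $\int_0^r(1-s)^{\e-p}\nu(s)\,ds$. This converts the integral into a boundary term plus $\int_0^r(1-s)^{\e-p-1}\widehat\nu(s)\,ds$ (times a constant $p-\e$), and the boundary term at $s=r$ is exactly $(1-r)^{\e-p}\widehat\nu(r)$, whose product with $(1-r)^{p-\e}$ gives the harmless constant $1$; the boundary term at $s=0$ is negative (or zero) and can be discarded.

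So after integration by parts it remains to estimate $(1-r)^{p-\e}(p-\e)\int_0^r(1-s)^{\e-p-1}\widehat\nu(s)\,ds$. Here I would split $\widehat\nu(s)=\widehat\nu(r)+\int_s^r\nu(t)\,dt$. The piece with $\widehat\nu(r)$ integrates explicitly: $(p-\e)\int_0^r(1-s)^{\e-p-1}\,ds\le(1-r)^{\e-p}\cdot\frac{p-\e}{p-\e}$, well, more precisely $(p-\e)\int_0^r(1-s)^{\e-p-1}ds = (1-r)^{\e-p}-1\le (1-r)^{\e-p}$, contributing a bounded term $\le \widehat\nu(r)(1-r)^{p-\e}\cdot(1-r)^{\e-p}=\widehat\nu(r)$ — and I should double-check: actually this term needs to be compared against $\widehat\nu(r)$ in the definition of $\DD_{p-\e}$, so dividing by $\widehat\nu(r)$ it is $\le 1$, fine. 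For the remaining piece, Fubini's theorem applied to $\int_0^r(1-s)^{\e-p-1}\int_s^r\nu(t)\,dt\,ds=\int_0^r\nu(t)\int_0^t(1-s)^{\e-p-1}\,ds\,dt$ produces, up to the constant $\tfrac{p-\e}{p-\e}=1$ after carrying the outer $(p-\e)$, essentially $\tfrac1{p-\e}\int_0^r\nu(t)\bigl((1-t)^{\e-p}-1\bigr)\,dt\le\tfrac1{p-\e}\int_0^r(1-t)^{\e-p}\nu(t)\,dt$.

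Putting this together, multiplying through by $(1-r)^{p-\e}$ and dividing by $\widehat\nu(r)$, I arrive at an inequality of the shape
$$
\frac{(1-r)^{p-\e}}{\widehat\nu(r)}\int_0^r\frac{\nu(s)}{(1-s)^{p-\e}}\,ds
\;\le\; 1 + \frac{\e}{p-\e}\cdot\frac{(1-r)^{p-\e}}{\widehat\nu(r)}\int_0^r\frac{\nu(s)}{(1-s)^{p-\e}}\,ds\cdot\bigl(\text{something}\bigr),
$$
where the "something" should be controlled by $\DD_p(\nu)$: indeed $(1-r)^{p-\e}(1-s)^{-(p-\e)}\le(1-r)^{p}(1-s)^{-p}$ for $s\le r$ is false in the wrong direction, so more care is needed — the correct move is to bound $(1-r)^{p-\e}\le(1-r)^{p}(1-r)^{-\e}$ and absorb $(1-r)^{-\e}(1-s)^{\e}\le 1$, reducing the tail integral to $(1-r)^p\int_0^r(1-s)^{-p}\nu(s)\,ds\le\DD_p(\nu)\widehat\nu(r)$. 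The coefficient of the self-referential term then becomes $\tfrac{\e}{p-\e}\DD_p(\nu)$, which is strictly less than $1$ exactly when $\e<\tfrac{p}{\DD_p(\nu)+1}$. Hence the quantity $\DD_{p-\e}(\nu)$, once known to be finite, satisfies $\DD_{p-\e}(\nu)\le 1+\tfrac{\e}{p-\e}\DD_p(\nu)\,\DD_{p-\e}(\nu)$ and can be solved for: $\DD_{p-\e}(\nu)\le\bigl(1-\tfrac{\e\DD_p(\nu)}{p-\e}\bigr)^{-1}<\infty$.

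The main obstacle is the bookkeeping needed to justify the self-improving absorption rigorously: a priori $\DD_{p-\e}(\nu)$ is not known to be finite, so one cannot simply subtract it from both sides. The clean fix is to run the whole estimate with $p-\e$ replaced by $p-\e$ but the supremum taken only over $r\in[0,r_0]$ for fixed $r_0<1$ — on such a compact range the integral $\int_0^{r_0}(1-s)^{\e-p}\nu(s)\,ds$ is manifestly finite since $\nu\in L^1$, so the truncated supremum $\DD_{p-\e}^{r_0}(\nu)$ is finite, the absorption argument applies to it with constants independent of $r_0$, and letting $r_0\to1^-$ yields the bound for $\DD_{p-\e}(\nu)$ itself. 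The remaining work — tracking the exponents $(1-r)^{-\e}(1-s)^{\e}\le1$, discarding the nonpositive boundary term at $s=0$, and verifying the elementary inequality $(1-r)^{\e-p}-1\le(1-r)^{\e-p}$ — is routine and I would relegate it to a couple of displayed lines.
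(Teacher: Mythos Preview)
Your plan contains a real gap: the integration by parts you propose, followed by the split $\whw(s)=\whw(r)+\int_s^r\nu(t)\,dt$ and Fubini, is circular. Carrying it through exactly, the boundary terms, the $\whw(r)$-piece, and the Fubini piece reassemble into the identity $\int_0^r(1-s)^{\e-p}\nu(s)\,ds=\int_0^r(1-t)^{\e-p}\nu(t)\,dt$; no coefficient strictly smaller than $1$ ever appears in front of the self-referential term. Your attempted fix fails for a concrete reason: the inequality ``$(1-r)^{-\e}(1-s)^{\e}\le 1$'' is false for $0\le s\le r<1$ (since $1-s\ge1-r$ and $\e>0$ force the product to be $\ge 1$), so the reduction to $(1-r)^{p}\int_0^r(1-s)^{-p}\nu(s)\,ds$ actually leaves behind a factor $(1-r)^{-\e}$, which blows up as $r\to1^-$. (Incidentally, the boundary contribution at $s=0$ in your integration by parts is $+\whw(0)$, not a nonpositive term.)

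What is missing is a choice of integration by parts that manufactures the small factor $\e$. Write $(1-s)^{\e-p}=(1-s)^{\e}\cdot(1-s)^{-p}$ and take $u=(1-s)^{\e}$, $dv=(1-s)^{-p}\nu(s)\,ds$, so that $V(s)=\int_0^s(1-t)^{-p}\nu(t)\,dt\le\DD_p(\nu)(1-s)^{-p}\whw(s)$ by hypothesis. Then
\[
\int_0^r(1-s)^{\e-p}\nu(s)\,ds=(1-r)^{\e}V(r)+\e\int_0^r(1-s)^{\e-1}V(s)\,ds
\le\DD_p(\nu)(1-r)^{\e-p}\whw(r)+\e\,\DD_p(\nu)\int_0^r(1-s)^{\e-p-1}\whw(s)\,ds.
\]
Now your original integration by parts, read as $(p-\e)\int_0^r(1-s)^{\e-p-1}\whw(s)\,ds\le\int_0^r(1-s)^{\e-p}\nu(s)\,ds+(1-r)^{\e-p}\whw(r)$, closes the loop with the self-referential coefficient $\e\,\DD_p(\nu)/(p-\e)$, which is $<1$ exactly when $\e<p/(\DD_p(\nu)+1)$. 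The resulting inequality holds pointwise in $r$, so no preliminary truncation to $[0,r_0]$ is needed.
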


The last result of this section is \cite[Lemma~5]{R2017b}, which shows that $\nu\in \RR$ in the norm $\|f\|_{A_\nu^{p,q}}$ can be replaced by $\whw(z)/(1-|z|)$ without losing any essential information.

\begin{letterlemma}\label{Lemma:asymp}
Let $0<p,q<\infty$ and $\nu$ be a weight.
\begin{itemize}
    \item[(i)] If $\nu \in \DDD$, then there exists $C=C(\nu)>0$ such that
        $$\|f\|_{A_\nu^{p,q}}^q\ge C \int_0^1 M_p^q(r,f)\,\frac{\whw(r)}{1-r}\,dr, \quad f\in \HO(\D).$$
    \item[(ii)] If $\nu \in \DD$, then there exists $C=C(\nu)>0$ such that
        $$\|f\|_{A_\nu^{p,q}}^q\le C \int_0^1 M_p^q(r,f)\,\frac{\whw(r)}{1-r}\,dr, \quad f\in \HO(\D).$$
\end{itemize}
\end{letterlemma}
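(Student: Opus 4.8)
The plan is to establish both inequalities by decomposing $[0,1)$ and comparing the two densities $\nu(r)$ and $\whw(r)/(1-r)$ against the non-decreasing function $r\mapsto M_p^q(r,f)$, whose monotonicity (Hardy's convexity theorem) is the only property of $f$ that will be used.

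For part~(i), where $\nu\in\DDD$, I would start from the definition: there exist $K=K(\nu)>1$ and $C=C(\nu)>1$ with $\whw\bigl(1-\tfrac{1-r}{K}\bigr)\le C^{-1}\whw(r)$, so that $\whw(r)\le\tfrac{C}{C-1}\int_r^{\,1-(1-r)/K}\nu(s)\,ds$ for all $0\le r<1$. Substituting this bound into $\int_0^1 M_p^q(r,f)\,\whw(r)/(1-r)\,dr$ and applying Fubini's theorem transfers the integration to the region where, for fixed $s$, the variable $r$ runs over $(\max\{0,1-K(1-s)\},\,s)$; on the resulting inner integral one uses $M_p^q(r,f)\le M_p^q(s,f)$ for $r\le s$ together with $\int_{\max\{0,1-K(1-s)\}}^{s}\frac{dr}{1-r}\le\log K$. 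This produces $\int_0^1 M_p^q(r,f)\,\whw(r)/(1-r)\,dr\le\tfrac{C\log K}{C-1}\,\|f\|_{A_\nu^{p,q}}^q$, which is~(i).

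For part~(ii), where $\nu\in\DD$, a Fubini argument in the style of~(i) is not available — it already fails for $f(z)=z^N$ as $s\to1$ — so I would work with the dyadic points $r_n=1-2^{-n}$. Since $\nu\in\DD$ gives $\whw(r_n)\le C\,\whw(r_{n+1})$ and $\whw$ is decreasing, one has $\whw(r)\asymp\whw(r_n)$ on $[r_n,r_{n+1}]$, together with $\int_{r_n}^{r_{n+1}}\nu(s)\,ds=\whw(r_n)-\whw(r_{n+1})\le\whw(r_n)$ and $\int_{r_n}^{r_{n+1}}\frac{dr}{1-r}=\log2$. Estimating $M_p^q(\cdot,f)$ from above by its value at the right endpoint on each piece of $\|f\|_{A_\nu^{p,q}}^q=\sum_n\int_{r_n}^{r_{n+1}}M_p^q(r,f)\nu(r)\,dr$, and from below by its value at the left endpoint on each piece of $\int_0^1 M_p^q(r,f)\,\whw(r)/(1-r)\,dr=\sum_n\int_{r_n}^{r_{n+1}}M_p^q(r,f)\,\whw(r)/(1-r)\,dr$, yields $\|f\|_{A_\nu^{p,q}}^q\lesssim\sum_n M_p^q(r_{n+1},f)\whw(r_n)$ and $\int_0^1 M_p^q(r,f)\,\whw(r)/(1-r)\,dr\gtrsim\sum_n M_p^q(r_n,f)\whw(r_n)$; a shift of index together with $\whw(r_n)\asymp\whw(r_{n+1})$ then bridges the two sums and gives~(ii).

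The main point to watch is precisely this asymmetry between the two directions: part~(i) relies on recovering $\whw(r)$ from the mass of $\nu$ on a single Whitney-type interval, which is exactly what $\DDD$ provides, whereas part~(ii) relies on the doubling of $\whw$ across dyadic intervals, which is exactly what $\DD$ provides. The remaining technicalities — non-emptiness of the Fubini region in~(i) (which uses $K>1$), the endpoint $r_0=0$ and convergence of the dyadic sums in~(ii), and the degenerate case $f\notin A_\nu^{p,q}$ in which both sides are infinite and the asserted inequalities are trivially consistent — are routine and I would not dwell on them.
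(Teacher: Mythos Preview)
Your argument is correct in both parts. The paper does not prove this lemma but cites \cite{R2017b}, where the proof is by partial integration together with the relation $\whw(r)\asymp\int_r^1\whw(s)/(1-s)\,ds$; the paper also explicitly mentions the dyadic-partition method (using the monotonicity of $M_p^q(r,f)$) as an alternative route. Your treatment of~(ii) is exactly this alternative: the points $r_n=1-2^{-n}$ are tailored to the $\DD$ condition, and the index shift closes the estimate cleanly. Your treatment of~(i), however, is different from both described methods: rather than integrating by parts or discretizing, you feed the $\DDD$ inequality directly into a Fubini swap over the Whitney-type region $\{(r,s):\,1-K(1-s)\le r\le s\}$, which is arguably the most transparent way to isolate what $\DDD$ contributes. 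The advantage of your approach is that each half uses only the hypothesis stated for it, with no appeal to \eqref{Eq:weight-scal} (which requires $\nu\in\RR$); the advantage of the partial-integration approach is that it handles both directions symmetrically once \eqref{Eq:weight-scal} is in hand.
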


For $\nu \in \RR$, Lemmas~\ref{Lemma:DD}(ii)~and~\ref{Lemma:DDD} yield
    \begin{equation}\label{Eq:weight-scal}
    \begin{split}
    \whw(r)\asymp \int_r^1\frac{\whw(s)}{1-s}\,ds, \quad 0\le r<1.
    \end{split}
    \end{equation}
In \cite{R2017b},
Lemma~\ref{Lemma:asymp} is proved by applying this fact together with partial integrations.
An alternative way to prove results like Lemma~\ref{Lemma:asymp} is to split the integral with respect to $dr$
to infinitely many parts by using a dyadic partition, and then apply \eqref{Eq:weight-scal} together with the monotonicity of $M_p^q(r,f)$.
An advantage of the last method is that $M_p^q(r,f)$ can be easily replaced by a certain monotonic function $g(r)$.
This observation will be utilized several times in the argument of Theorem~\ref{Thm2}.

\section{Quotient factorization}\label{Sec:3}

Recall that if $f\in H^p$ for some $p>0$, then there exist $f_1,f_2\in H^\infty$ such that $f=f_1/f_2$.
This is an important consequence of classical factorization \cite[Theorem~2.1]{Duren1970} by F.~and~R.~Nevanlinna.
The main purpose of this section is to give the following $\mathcal{B}_{\nu}^{p,q}$ counterpart for the above-mentioned result.

\begin{theorem}\label{Thm4}
Let $1\le p<\infty$, $0<q<\infty$ and $\nu \in \RR \cap \DD_q$. If $f\in \mathcal{B}_{\nu}^{p,q}$, then
there exist $f_1,f_2\in \mathcal{B}_{\nu}^{p,q} \cap H^\infty$ such that $f=f_1/f_2$ and $f_2$ is an outer function.
\end{theorem}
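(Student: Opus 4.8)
\textbf{Proof plan for Theorem~\ref{Thm4}.}
The plan is to mimic the classical F.~and~R.~Nevanlinna construction but to control the Besov norms using the characterization in Theorem~\ref{Thm3}, which depends only on $|f|$, $p$, $q$ and $\nu$. Start by writing the inner-outer factorization $f=I\cdot O_\phi$, where $I$ is inner and $O_\phi$ is the outer function with $\phi=|f|$ almost everywhere on $\T$. Since $f\in \mathcal{B}_{\nu}^{p,q}\subset H^p$, we have $\log\phi=\log|f|\in L^1(\T)$ and $\phi\in L^p(\T)$. The idea is to split $\phi$ multiplicatively into a bounded part and the reciprocal of a bounded part: set $\phi_1=\min\{\phi,1\}$ and $\phi_2=1/\max\{\phi,1\}=\min\{1/\phi,1\}$, so that $\phi=\phi_1/\phi_2$ with $0<\phi_1,\phi_2\le 1$ and both $\log\phi_1,\log\phi_2\in L^1(\T)$. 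Then define $f_1=I\cdot O_{\phi_1}$ and $f_2=O_{\phi_2}$. By \eqref{Eq:inner-outer-boundary} and the uniqueness of the factorization, $f_1/f_2=I\cdot O_{\phi_1}/O_{\phi_2}=I\cdot O_{\phi_1/\phi_2}=I\cdot O_\phi=f$, and $f_2$ is an outer function by construction. Boundedness is immediate: $|f_1(z)|\le |O_{\phi_1}(z)|=\exp(\int \log\phi_1\,d\mu_z)\le 1$ since $\phi_1\le 1$, and likewise $|f_2|\le 1$, so $f_1,f_2\in H^\infty$.

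It remains to show $f_1,f_2\in \mathcal{B}_{\nu}^{p,q}$, i.e.\ that $\|f_1'\|_{A_\nu^{p,q}}^q$ and $\|f_2'\|_{A_\nu^{p,q}}^q$ are finite. This is where Theorem~\ref{Thm3} enters. By the first inequality in \eqref{Eq:SP4}, it suffices to bound $F_1(f_j)+F_2(f_j)$ for $j=1,2$. The key pointwise observation is that $|f_j|$ on $\T$ is a truncation of $|f|$: we have $|f_1(\xi)|=\phi_1(\xi)=\min\{|f(\xi)|,1\}$ and $|f_2(\xi)|=\phi_2(\xi)=\min\{1/|f(\xi)|,1\}$ for a.e.\ $\xi\in\T$. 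For $f_1$ the point is that $x\mapsto\min\{x,1\}$ is a contraction, so for all $\xi,\eta\in\T$,
    $$\big||f_1(\xi)|-|f_1(\eta)|\big|\le \big||f(\xi)|-|f(\eta)|\big|;$$
integrating against the product Poisson measures and using Jensen/Minkowski inequalities (here $p\ge 1$ is used, exactly as in the other theorems) shows term by term that $F_1(f_1)\lesssim F_1(f)$ and $F_2(f_1)\lesssim F_2(f)$, hence $\|f_1'\|_{A_\nu^{p,q}}^q\lesssim F_1(f)+F_2(f)\lesssim \|f'\|_{A_\nu^{p,q}}^q+\|f\|_{H^p}^q<\infty$ by the second inequality in \eqref{Eq:SP4}. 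The same contraction argument does not directly apply to $f_2$ because $x\mapsto\min\{1/x,1\}$ is not globally Lipschitz near $0$, and $|f|$ may vanish on a set of measure zero; however, the map $x\mapsto\min\{1/x,1\}$ restricted to $[\,1,\infty)$ is $1$-Lipschitz after the harmless reparametrization, and more robustly one checks the bound $\big||f_2(\xi)|-|f_2(\eta)|\big|\le \big|\,|f(\xi)|-|f(\eta)|\,\big|$ whenever at least one of $|f(\xi)|,|f(\eta)|$ is $\ge 1$, and both sides vanish when both are $\le 1$; so again the integrand defining $F_1(f_2)$ and $F_2(f_2)$ is pointwise dominated by the corresponding integrand for $f$, giving $\|f_2'\|_{A_\nu^{p,q}}^q\lesssim \|f'\|_{A_\nu^{p,q}}^q+\|f\|_{H^p}^q<\infty$.

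The main obstacle I anticipate is the rigorous handling of $f_2=O_{\phi_2}$ near the zero set of $|f|$: one must make sure that the pointwise inequality comparing $\big||f_2(\xi)|-|f_2(\eta)|\big|$ with $\big||f(\xi)|-|f(\eta)|\big|$ is genuinely valid a.e., including the cases where $|f(\xi)|$ or $|f(\eta)|$ is very small (where $\phi_2=1$) versus large. A clean way to do this is to note $\phi_1\cdot\tfrac1{\phi_2}=\phi$ with $0<\phi_1,\phi_2\le1$, so $\log\phi_1-\log\phi_2=\log\phi$ with $\log\phi_1\le0$ and $\log\phi_2\le0$ and in fact $\log\phi_1=\min\{\log\phi,0\}$, $\log\phi_2=-\max\{\log\phi,0\}$; these are the negative and positive parts of $\log\phi$ up to sign, and $t\mapsto\min\{t,0\}$, $t\mapsto\max\{t,0\}$ are $1$-Lipschitz. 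Translating back through $\phi_j=\exp(\log\phi_j)$ and using that $|O_{\phi_j}(z)|=\exp(\int\log\phi_j\,d\mu_z)$ is itself an exponential of a harmonic function bounded above by $0$, one gets the desired domination of the $F_1,F_2$ integrands directly at the level of the harmonic extensions of $\log\phi_j$, bypassing the non-Lipschitz issue entirely. Once the two norms are shown finite, the theorem follows. I would also remark that by symmetry (replacing $f$ by $1/f$ when $f$ is outer and zero-free) one recovers the refinement that both $f_1,f_2$ can be taken outer when $f$ itself is outer, but the stated conclusion only asks for $f_2$ outer, which the construction delivers automatically.
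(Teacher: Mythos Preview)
Your construction is exactly the paper's: $f_1=I\,O_{\min\{\phi,1\}}$ and $f_2=1/O_{\max\{\phi,1\}}=O_{\phi_2}$, and the reduction to Theorem~\ref{Thm3} is also the right idea. The gap is in your justification of $F_1(f_j)\lesssim F_1(f)$. Your contraction argument compares boundary values $|f_j(\xi)|$ with $|f(\xi)|$, and that is enough for $F_2$, which depends only on $|f|\restriction_{\T}$: writing $\psi=\min\{\phi,1\}$ one has
\[
\bigl|\psi(e^{i\theta})-P[\psi](z)\bigr|\le \int_0^{2\pi}\bigl|\psi(e^{i\theta})-\psi(e^{i\theta'})\bigr|\,d\mu_z(\theta')\le \int_0^{2\pi}\bigl|\phi(e^{i\theta})-\phi(e^{i\theta'})\bigr|\,d\mu_z(\theta'),
\]
which after one more Poisson integration gives $F_2(f_1)\lesssim F_2(f)$, and your case analysis gives the same for $f_2$. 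But $F_1(g)$ involves the \emph{interior} value $|g(z)|$ of the analytic function, not a Poisson integral of boundary data; the integrand is $P[|g|](z)-|g(z)|$, and $|f_1(z)|=|I(z)|\,|O_{\min\{\phi,1\}}(z)|$ is not a $1$-Lipschitz function of $|f(z)|$. So ``integrating against product Poisson measures'' does not touch this term, and the Lipschitz inequality on $\T$ gives no control.

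What is actually needed is Jensen's inequality applied through the outer-function formula $|O_\psi(z)|=\exp\bigl(P[\log\psi](z)\bigr)$. The paper isolates this in Proposition~\ref{Prop-inner-outer}: for instance, from $P\bigl[\phi/\max\{\phi,1\}\bigr](z)\ge |O_\phi(z)|/|O_{\max\{\phi,1\}}(z)|$ one deduces the pointwise bound $P[\max\{\phi,1\}](z)-|O_{\max\{\phi,1\}}(z)|\le P[\phi](z)-|f(z)|$, which yields $F_1(O_{\max\{\phi,1\}})\le F_1(f)$; a symmetric argument gives $F_1(IO_{\min\{\phi,1\}})\le F_1(f)$. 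Your closing remark about working ``at the level of the harmonic extensions of $\log\phi_j$'' is pointing toward this, but it does not supply the inequality. Finally, for $f_2$ the paper avoids estimating $F_1(f_2)$ altogether: once $O_{\max\{\phi,1\}}'\in A_\nu^{p,q}$ is known, the elementary bound $|f_2'(z)|\le |O_{\max\{\phi,1\}}'(z)|$ (since $|O_{\max\{\phi,1\}}|\ge 1$) finishes the job more cheaply than a direct attack on $F_1(f_2)$.
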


It is worth mentioning that \cite[Theorem~9.19]{Pavlovic2014} is a similar type of result as Theorem~\ref{Thm4} with a different hypothesis for $\nu$.
Moreover, we note that Theorem~\ref{Thm4} generalizes \cite[Corollary~2.7]{Aleman1992}, \cite[Theorem~3.4]{Boe2003} and \cite[Corollary~3.4]{Dyakonov1998}.
For its argument we need an extension of \cite[Theorem~3.3]{Boe2003}.
Note that a part of the next pages is really inspired by \cite{Boe2003}.

\begin{proposition}\label{Prop-inner-outer}
Let $1\le p<\infty$, $0<q<\infty$, $\nu \in \RR \cap \DD_q$ and $f\in H^p$
be the product of an inner function $I$ and an outer function $O_\phi$.
Then there exists a constant $C=C(p,q,\nu)>0$ such that
    \begin{equation*}
    \begin{split}
    \|O_{\max\{\phi,1\}}'\|_{A_\nu^{p,q}}^q + \|(IO_{\min\{\phi,1\}})'\|_{A_\nu^{p,q}}^q + \|O_{\max\{\phi,1\}}\|_{H^p}^q
    \le C \left(\|f'\|_{A_\nu^{p,q}}^q + \|f\|_{H^p}^q + 1\right).
    \end{split}
    \end{equation*}
\end{proposition}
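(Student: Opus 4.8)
The plan is to split $f = I \cdot O_\phi$ using the pointwise factorization of the outer part, namely $O_\phi = O_{\max\{\phi,1\}} \cdot O_{\min\{\phi,1\}}$, and to control each of the three terms on the left-hand side separately, using Theorem~\ref{Thm3} as the main engine (together with Theorem~\ref{Thm2} for the upper bounds). The key observation, following B\o e, is that the boundary modulus $|O_{\max\{\phi,1\}}(\xi)| = \max\{\phi(\xi),1\} = \max\{|f(\xi)|,1\}$ and $|O_{\min\{\phi,1\}}(\xi)| = \min\{|f(\xi)|,1\}$ almost everywhere on $\T$ by \eqref{Eq:inner-outer-boundary}, so the quantities $F_1$ and $F_2$ for these functions involve only $\max\{|f|,1\}$ and $\min\{|f|,1\}$. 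Since $|IO_{\min\{\phi,1\}}(\xi)| = \min\{|f(\xi)|,1\}$ as well, and $F_1,F_2$ depend only on the boundary modulus, the characterization in Theorem~\ref{Thm3} reduces everything to comparing $F_1(\max\{|f|,1\}) + F_2(\max\{|f|,1\})$ and $F_1(\min\{|f|,1\}) + F_2(\min\{|f|,1\})$ against $\|f'\|_{A_\nu^{p,q}}^q + \|f\|_{H^p}^q + 1$.

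First I would handle the term $\|O_{\max\{\phi,1\}}\|_{H^p}^q$: since $\max\{|f(\xi)|,1\} \le |f(\xi)| + 1$, we get $\|O_{\max\{\phi,1\}}\|_{H^p} = \|\max\{|f|,1\}\|_{L^p} \le \|f\|_{H^p} + (2\pi)^{1/p}$, which is controlled by the right-hand side (raising to the $q$-th power and using that $(a+b)^q \lesssim a^q + b^q$). Next, for the two derivative terms, I would apply the first inequality of \eqref{Eq:SP4} in Theorem~\ref{Thm3} to $O_{\max\{\phi,1\}}$ and to $IO_{\min\{\phi,1\}}$ (both are in $H^p$, the latter with norm at most $\|f\|_{H^p}$). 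The heart of the matter is then the pointwise inequalities: for scalars $a,b \ge 0$ one has the elementary estimates controlling $|\max\{a,1\} - \max\{b,1\}|$ and $|\min\{a,1\} - \min\{b,1\}|$ by $|a-b|$. Using these inside the Poisson integrals defining $F_1$ and $F_2$, together with the fact that $d\mu_z$ is a probability measure and Minkowski's / Jensen's inequality, I expect to obtain
\begin{equation*}
F_1(O_{\max\{\phi,1\}}) + F_2(O_{\max\{\phi,1\}}) + F_1(IO_{\min\{\phi,1\}}) + F_2(IO_{\min\{\phi,1\}}) \lesssim F_1(f) + F_2(f) + \Big(\text{error terms}\Big),
\end{equation*}
where the error terms come from the truncation at level $1$ and should be absorbable into $\|f\|_{H^p}^q + 1$. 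Then the right inequality of \eqref{Eq:SP4} applied to $f$ bounds $F_1(f) + F_2(f)$ by $\|f'\|_{A_\nu^{p,q}}^q + \|f\|_{H^p}^q$, closing the argument.

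The main obstacle I anticipate is making the truncation error terms genuinely harmless. The subtlety is that $F_1$ measures the gap between the Poisson extension of the boundary modulus and the modulus of the analytic function at interior points, and truncating $|f|$ at the constant $1$ changes the interior function in a way that is not simply the truncation of the interior modulus — $O_{\max\{\phi,1\}}(z)$ is genuinely different from $\max\{|f(z)|,1\}$. However, B\o e's upper estimate for $|O_\phi'|$ (invoked in the discussion after Theorem~\ref{Thm3} and coming from \cite{Boe2003}) should give the needed control: on the region where $\phi \ge 1$ the two outer functions essentially agree up to the inner factor, and on the complementary region $O_{\max\{\phi,1\}}$ is "close to constant $1$", so its derivative is small there — quantitatively, $|O_{\max\{\phi,1\}}'(z)|$ is dominated by a Poisson-type integral of $(\phi-1)^+$, which feeds back into $\|f'\|_{A_\nu^{p,q}}$ and $\|f\|_{H^p}$ via Theorem~\ref{Thm2} and Lemma~\ref{Lemma:asymp}. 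Carefully splitting the $t$-integral (or the $\T$-integral) into the two regions $\{\phi\ge 1\}$ and $\{\phi<1\}$ and estimating each, using that $\nu\in\RR\cap\DD_q$ so that \eqref{Eq:weight-scal} and the doubling bounds of Lemmas~\ref{Lemma:DD} and~\ref{Lemma:DDD} are available, is where the real work lies; this is essentially the generalization of \cite[Theorem~3.3]{Boe2003} to doubling weights that the proposition claims.
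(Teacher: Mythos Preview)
Your overall framework---apply Theorem~\ref{Thm3} in both directions to reduce the derivative bounds to comparisons of $F_1$ and $F_2$ for $O_{\max\{\phi,1\}}$ and $IO_{\min\{\phi,1\}}$ against those of $f$---is exactly the paper's strategy, and your handling of $\|O_{\max\{\phi,1\}}\|_{H^p}$ is fine. For $F_2$ your Lipschitz idea actually works (writing the inner expression as $\int\!\int|\max\{\phi(e^{i\t}),1\}-\max\{\phi(e^{is}),1\}|\,d\mu_z(s)\,d\mu_z(\t)$, applying $|\max\{a,1\}-\max\{b,1\}|\le|a-b|$, and then the triangle inequality gives a bound by twice the $F_2$-integrand of $f$); the paper instead uses a level-set argument to get constant $1$, but either route is sound.

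The genuine gap is $F_1$. You correctly diagnose the obstacle---$|O_{\max\{\phi,1\}}(z)|$ is not $\max\{|f(z)|,1\}$, so the scalar Lipschitz bound is unavailable---but your proposed fix does not close it. Applying B\o e's bound (Lemma~\ref{lemma:outer-upper}) to $O_{\max\{\phi,1\}}$ simply reproduces the $F_1$- and $F_2$-integrands for that same function, so you are back where you started; and the region-splitting on $\{\phi\ge1\}$ versus $\{\phi<1\}$ is a statement about boundary points, whereas $F_1$ compares the Poisson extension to the interior value $|O_{\max\{\phi,1\}}(re^{it})|$, which depends on all of $\phi$ at once. The missing idea is a direct pointwise comparison via Jensen's inequality on the exponential representation of outer functions: since $|O_{\max\{\phi,1\}}(z)|\ge1$ and $\exp\!\big(\int\log\frac{\phi}{\max\{\phi,1\}}\,d\mu_z\big)\le\int\frac{\phi}{\max\{\phi,1\}}\,d\mu_z$, one obtains
\[
\int_0^{2\pi}\max\{\phi,1\}\,d\mu_z-|O_{\max\{\phi,1\}}(z)|\;\le\;\int_0^{2\pi}\phi\,d\mu_z-|O_\phi(z)|\;\le\;\int_0^{2\pi}|f|\,d\mu_z-|f(z)|,
\]
giving $F_1(O_{\max\{\phi,1\}})\le F_1(f)$ with no error term at all. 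An analogous Jensen step (now using $|O_{\min\{\phi,1\}}(z)|\le1$), combined with the identity $\int|f|\,d\mu_z-|f(z)|=\big(\int\phi\,d\mu_z-|O_\phi(z)|\big)+|O_\phi(z)|(1-|I(z)|)$ and $|O_\phi|\ge|O_{\min\{\phi,1\}}|$, yields $F_1(IO_{\min\{\phi,1\}})\le F_1(f)$.
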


Before the proof of Proposition~\ref{Prop-inner-outer}, we note that the quantities $F_1(f)$ and $F_2(f)$
in Theorem~\ref{Thm3} are used repeatedly in the future.

\begin{proof}
Let us begin by noting that $|O_{\phi}(e^{i\t})|=\phi(e^{i\t})$, $|O_{\max\{\phi,1\}}(e^{i\t})|=\max\{\phi(e^{i\t}),1\}$ and
     \begin{equation*}
    \begin{split}
    \max\{\phi(e^{i\t}),1\}-\phi(e^{i\t})&=\frac{\max\{\phi(e^{i\t}),1\}-\phi(e^{i\t})}{\max\{\phi(e^{i\t}),1\}} \\
    &\le |O_{\max\{\phi,1\}}(z)|\left(1-\frac{\phi(e^{i\t})}{\max\{\phi(e^{i\t}),1\}}\right)
    \end{split}
    \end{equation*}
for all $z\in \D$ and almost every $\t\in [0,2\pi)$. Using these facts together with
Jensen's inequality \cite[Chapter~I,~Lemma~6.1]{Garnett1981} and the definition of outer functions,
we obtain
    \begin{equation*}
    \begin{split}
    &\int_0^{2\pi} |O_{\max\{\phi,1\}}(e^{i\t})|\,d\mu_z(\t) - \int_0^{2\pi}|O_\phi(e^{i\t})|\,d\mu_z(\t) \\
    &\quad \le |O_{\max\{\phi,1\}}(z)|\left(1-\int_0^{2\pi} \frac{\phi(e^{i\t})}{\max\{\phi(e^{i\t}),1\}}\,d\mu_z(\t)\right)\\
    &\quad = |O_{\max\{\phi,1\}}(z)|\left(1-\int_0^{2\pi} \exp\left(\log \phi(e^{i\t})-\log \max\{\phi(e^{i\t}),1\}\right)\,d\mu_z(\t)\right)\\
    &\quad \le |O_{\max\{\phi,1\}}(z)|\left(1-\frac{|O_{\phi}(z)|}{|O_{\max\{\phi,1\}}(z)|}\right), \quad z\in\D.
    \end{split}
    \end{equation*}
Consequently, the obvious inequality $|f(z)|\le |O_\phi(z)|$ yields
    \begin{equation}\label{Eq:prop-proof-2}
    \begin{split}
    \int_0^{2\pi} |O_{\max\{\phi,1\}}(e^{i\t})|\,d\mu_z(\t) - |O_{\max\{\phi,1\}}(z)|
    \le \int_0^{2\pi} |f(e^{i\t})|d\mu_z(\t)-|f(z)|, \quad z\in \D.
    \end{split}
    \end{equation}
Write $z=re^{it}$. Raising both sides of \eqref{Eq:prop-proof-2} to power $p$, integrating from $0$ to $2\pi$ with respect to $dt$, then raising both sides to power $q/p$ and finally integrating from $0$ to $1$ with respect to $\nu(r) dr/(1-r)^q$, we obtain $F_1(O_{\max\{\phi,1\}})\le F_1(f)$.

Next we show $F_2(O_{\max\{\phi,1\}})\le F_2(f)$. Set
    \begin{equation*}
    \begin{split}
    \Gamma_1=\Gamma_1(z,\phi)=\left\{\t\in [0,2\pi):\int_0^{2\pi} \max\{\phi(e^{is}),1\} d\mu_z(s)\le \phi(e^{i\t})\right\}
    \end{split}
    \end{equation*}
and
    \begin{equation*}
    \begin{split}
    \Gamma_2=\Gamma_2(z,\phi)=\left\{\t\in [0,2\pi):\int_0^{2\pi} \phi(e^{is}) d\mu_z(s)\le \phi(e^{i\t})\right\}, \quad z\in \D.
    \end{split}
    \end{equation*}
Then elementary calculations yield
    \begin{equation}\label{Eq:Gamma12-est}
    \begin{split}
    &\int_0^{2\pi} \left|\max\{\phi(e^{i\t}),1\} -\int_0^{2\pi} \max\{\phi(e^{is}),1\}  d\mu_z(s)\right| d\mu_z(\t) \\
    &\quad =2\int_{\Gamma_1}\left(\max\{\phi(e^{i\t}),1\} -\int_0^{2\pi} \max\{\phi(e^{is}),1\}  d\mu_z(s)\right)\,d\mu_z(\t)\\
    &\quad \quad+ \int_0^{2\pi}\left(\int_0^{2\pi} \max\{\phi(e^{is}),1\}  d\mu_z(s)-\max\{\phi(e^{i\t}),1\} \right)\,d\mu_z(\t) \\
    &\quad= 2\int_{\Gamma_1}\left(\phi(e^{i\t})-\int_0^{2\pi} \max\{\phi(e^{is}),1\} d\mu_z(s)\right)\,d\mu_z(\t) \\
    &\quad\le 2\int_{\Gamma_2}\left(\phi(e^{i\t})-\int_0^{2\pi} \phi(e^{is}) d\mu_z(s)\right)\,d\mu_z(\t) \\
    &\quad =\int_0^{2\pi} \left|\phi(e^{i\t}) -\int_0^{2\pi} \phi(e^{is}) d\mu_z(s)\right| d\mu_z(\t), \quad z\in \D.
    \end{split}
    \end{equation}
Consequently, we obtain $F_2(O_{\max\{\phi,1\}})\le F_2(O_\phi)=F_2(f)$ by doing a similar integral procedure as above.
Now Theorem~\ref{Thm3} together with the inequalities for $F_1(f)$ and $F_2(f)$ gives
    \begin{equation}\label{Eq:prop-proof-3}
    \begin{split}
    \|O_{\max\{\phi,1\}}'\|_{A_\nu^{p,q}}^q + \|O_{\max\{\phi,1\}}\|_{H^p}^q
    \lesssim \|f'\|_{A_\nu^{p,q}}^q + \|f\|_{H^p}^q + 1.
    \end{split}
    \end{equation}

By \eqref{Eq:prop-proof-3} it suffices to show
    \begin{equation}\label{Eq:prop-proof-4}
    \begin{split}
    \|(IO_{\min\{\phi,1\}})'\|_{A_\nu^{p,q}}^q \lesssim \|f'\|_{A_\nu^{p,q}}^q + \|f\|_{H^p}^q.
    \end{split}
    \end{equation}
Since
     \begin{equation*}
    \begin{split}
    \phi(e^{i\t})-\min\{\phi(e^{i\t}),1\}\ge |O_{\min\{\phi,1\}}(z)|\left(\frac{\phi(e^{i\t})}{\min\{\phi(e^{i\t}),1\}}-1\right),
    \end{split}
    \end{equation*}
we obtain
     \begin{equation*}
    \begin{split}
    \int_0^{2\pi} |O_{\phi}(e^{i\t})|d\mu_z(\t)-|O_{\phi}(z)|
    \ge \int_0^{2\pi} |O_{\min\{\phi,1\}}(e^{i\t})|\,d\mu_z(\t) - |O_{\min\{\phi,1\}}(z)|, \quad z\in \D,
    \end{split}
    \end{equation*}
by arguing as above using Jensen's inequality.
It follows that
    \begin{equation*}
    \begin{split}
    &\int_0^{2\pi} |f(e^{i\t})|d\mu_z(\t)-|f(z)|=\left(\int_0^{2\pi} |O_{\phi}(e^{i\t})|d\mu_z(\t)-|O_{\phi}(z)|\right)+|O_{\phi}(z)|(1-|I(z)|) \\
    &\quad \ge \left(\int_0^{2\pi} |O_{\min\{\phi,1\}}(e^{i\t})|d\mu_z(\t)-|O_{\min\{\phi,1\}}(z)|\right)+|O_{\min\{\phi,1\}}(z)|(1-|I(z)|) \\
    &\quad = \int_0^{2\pi} |IO_{\min\{\phi,1\}}(e^{i\t})|d\mu_z(\t)-|IO_{\min\{\phi,1\}}(z)|, \quad z\in \D.
    \end{split}
    \end{equation*}
Hence it is easy to deduce $F_1(IO_{\min\{\phi,1\}})\le F_1(f)$. Since
    $$F_2(IO_{\min\{\phi,1\}})=F_2(O_{\min\{\phi,1\}})\le F_2(O_\phi) =F_2(f)$$
can be shown by using a modification of \eqref{Eq:Gamma12-est}, the desired estimate \eqref{Eq:prop-proof-4} follows from Theorem~\ref{Thm3}.
This completes the proof.
\end{proof}

Now we can easily prove Theorem~\ref{Thm4} by using Proposition~\ref{Prop-inner-outer}.

\medskip

\noindent
\emph{Proof of Theorem~\ref{Thm4}.} By the inner-outer factorization,
there exist an inner function $I$ and an outer function $O_{\phi}$ such that $f=IO_{\phi}$.
Since $O_{\phi}=O_{\min\{\phi,1\}} O_{\max\{\phi,1\}}$, we have $f=f_1/f_2$,
where $f_1=IO_{\min\{\phi,1\}}$ and $f_2=1/O_{\max\{\phi,1\}}$.
Applying Proposition~\ref{Prop-inner-outer} together with the inequalities
    $$|O_{\min\{\phi,1\}}(z)|\le 1 \le |O_{\max\{\phi,1\}}(z)|$$
and
    $$|f_2'(z)|
    \le |O_{\max\{\phi,1\}}(z)|^2|f_2'(z)|
    =|O_{\max\{\phi,1\}}'(z)|, \quad z\in \D,$$
we can check that $f_1$ and $f_2$ belong to $\mathcal{B}_{\nu}^{p,q}\cap H^\infty$.
Moreover, it is obvious that $f_2$ is an outer function.
Hence the proof is complete.  \hfill$\Box$

\section{Product of $f\in H^p$ and an inner function in $\mathcal{B}_{\nu}^{p,q}$}\label{Sec:4}

Theorem~\ref{Thm:fI-SP} below gives a sufficient and necessary condition guaranteeing that the product of
$f\in H^p$ and an inner function belongs to $\mathcal{B}_{\nu}^{p,q}$.
This result generalizes \cite[Corollary~3.2]{Boe2003}, the essential contents of \cite[Corollary~3.1]{Boe2003} and \cite[Theorem~3.2]{Dyakonov1998}.

\begin{theorem}\label{Thm:fI-SP}
Let $1\le p<\infty$, $0<q<\infty$, $\nu \in \RR \cap \DD_q$, $f\in H^p$ and $I$ be an inner function.
Then $fI\in \mathcal{B}_{\nu}^{p,q}$ if and only if $f\in \mathcal{B}_{\nu}^{p,q}$ and
    \begin{equation*}
    \begin{split}
    \int_0^1 \left(\int_0^{2\pi} \left(\frac{|f(re^{it})|(1-|I(re^{it})|)}{1-r}\right)^p\,dt \right)^{q/p}\,\nu(r)\,dr<\infty.
    \end{split}
    \end{equation*}
\end{theorem}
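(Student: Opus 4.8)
The plan is to apply Theorem~\ref{Thm3} to both $f$ and $fI$, and then to compare the quantities $F_1$ and $F_2$ for these two functions, exploiting the fact that $|fI(e^{i\t})|=|f(e^{i\t})|$ a.e.\ on $\T$ while $|fI(z)|=|f(z)|\,|I(z)|\le|f(z)|$ in $\D$. First I would observe that, since the boundary moduli agree, the inner integrals $\int_0^{2\pi}|f(e^{i\t})|\,d\mu_z(\t)$ coincide for $f$ and $fI$; consequently the quantity $F_2$, which depends only on $|f|$ on $\T$, satisfies $F_2(fI)=F_2(f)$ exactly. For $F_1$, the extra term is
    \begin{equation*}
    \begin{split}
    \int_0^{2\pi}|f(e^{i\t})|\,d\mu_z(\t)-|fI(z)|=\left(\int_0^{2\pi}|f(e^{i\t})|\,d\mu_z(\t)-|f(z)|\right)+|f(z)|(1-|I(z)|),
    \end{split}
    \end{equation*}
so that, writing $z=re^{it}$, raising to power $p$, integrating in $t$, raising to power $q/p$ and integrating against $\nu(r)\,dr/(1-r)^q$, one gets by the triangle inequality in $L^p(dt)$ and then in $L^{q/p}$ (or quasi-triangle inequality, with a constant depending on $p,q$) that
    \begin{equation*}
    \begin{split}
    F_1(fI)\asymp F_1(f)+\int_0^1\left(\int_0^{2\pi}\left(\frac{|f(re^{it})|(1-|I(re^{it})|)}{1-r}\right)^p dt\right)^{q/p}\nu(r)\,dr=:F_1(f)+G(f,I).
    \end{split}
    \end{equation*}
Here the lower bound $F_1(fI)\gtrsim \max\{F_1(f),G(f,I)\}$ needs care since the two summands above need not be comparable; I would split the region of integration (in $\t$) according to which of $|f(re^{it})|(1-|I|)$ and the oscillation term dominates, or simply use that for nonnegative quantities $a,b$ one has $(a+b)^p\asymp a^p+b^p$ and likewise at the outer exponent, which gives the two-sided comparison directly.

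With these comparisons in hand the argument closes quickly. If $fI\in\B_\nu^{p,q}$, then $\|(fI)'\|_{A_\nu^{p,q}}^q<\infty$ and $fI\in H^p$ (clear, as $|fI|=|f|$ on $\T$), so by Theorem~\ref{Thm3} $F_1(fI)+F_2(fI)<\infty$; hence $F_1(f)+F_2(f)\le F_1(fI)+F_2(fI)<\infty$ and $G(f,I)<\infty$; applying the other inequality of Theorem~\ref{Thm3} to $f$ gives $\|f'\|_{A_\nu^{p,q}}^q<\infty$, i.e.\ $f\in\B_\nu^{p,q}$, and $G(f,I)<\infty$ is the stated integral condition. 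Conversely, if $f\in\B_\nu^{p,q}$ and $G(f,I)<\infty$, then by Theorem~\ref{Thm3} $F_1(f)+F_2(f)\lesssim\|f'\|_{A_\nu^{p,q}}^q+\|f\|_{H^p}^q<\infty$, whence $F_1(fI)+F_2(fI)\lesssim F_1(f)+F_2(f)+G(f,I)<\infty$, and the first inequality in \eqref{Eq:SP4} applied to $fI$ yields $\|(fI)'\|_{A_\nu^{p,q}}^q<\infty$; together with $fI\in H^p$ this gives $fI\in\B_\nu^{p,q}$.

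I expect the main obstacle to be the bookkeeping in the $F_1$ comparison: one must be careful that the "triangle inequality" steps go through in both directions with constants depending only on $p$ and $q$ (for $0<q<\infty$ and $1\le p<\infty$ this is fine using $(a+b)^s\asymp a^s+b^s$ for $s>0$ and $a,b\ge0$), and that the $F_2$ terms really are identical because $d\mu_z$ is a probability measure and $|f|$, $|fI|$ have the same nontangential boundary values a.e. A secondary point worth stating explicitly is that $fI\in H^p$ whenever $f\in H^p$ and $I$ is inner, so that Theorem~\ref{Thm3} is applicable to $fI$; this is immediate from $|fI|\le|f|$ in $\D$.
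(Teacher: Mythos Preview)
Your proposal is correct and follows essentially the same route as the paper: both arguments rest on the identity
\[
\int_0^{2\pi}|fI(e^{i\t})|\,d\mu_z(\t)-|fI(z)|=\Bigl(\int_0^{2\pi}|f(e^{i\t})|\,d\mu_z(\t)-|f(z)|\Bigr)+|f(z)|(1-|I(z)|),
\]
together with $F_2(fI)=F_2(f)$ and $\|fI\|_{H^p}=\|f\|_{H^p}$, and then invoke Theorem~\ref{Thm3}. The paper handles the two-sided comparison $F_1(fI)\asymp F_1(f)+G(f,I)$ exactly as you suggest in your second alternative, using that both summands are nonnegative so that $(a+b)^s\asymp a^s+b^s$; your extra discussion of splitting the integration region is unnecessary once this is observed.
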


\begin{proof}
We have
    \begin{equation}\label{Eq:fI-SP}
    \begin{split}
    \int_0^{2\pi} |fI(e^{i\t})|d\mu_z(\t)-|fI(z)|=\left(\int_0^{2\pi} |f(e^{i\t})|d\mu_z(\t)-|f(z)|\right)+|f(z)|(1-|I(z)|) \\
    \end{split}
    \end{equation}
for all $z\in \D$. Write $z=re^{it}$. Raising both sides of \eqref{Eq:fI-SP} to power $p$, integrating from $0$ to $2\pi$ with respect to $dt$, then raising both sides to power $q/p$, integrating from $0$ to $1$ with respect to $\nu(r) dr/(1-r)^q$ and finally splitting the right-hand side into two parts by using well-known inequalities, we obtain
    \begin{equation*}
    \begin{split}
    F_1(fI)\asymp F_1(f)+\int_0^1 \left(\int_0^{2\pi} \left(\frac{|f(re^{it})|(1-|I(re^{it})|)}{1-r}\right)^p\,dt\right)^{q/p}\,\nu(r)\,dr.
    \end{split}
    \end{equation*}
Since
    $$F_2(fI)+\|fI\|_{H^p}^q=F_2(f)+\|f\|_{H^p}^q,$$
the assertion follows from Theorem~\ref{Thm3}.
\end{proof}

Recall that a subspace $X$ of $H^p$ satisfies the $F$-property if the hypothesis $fI\in X$, where $f\in H^p$ and $I$ is an inner function,
implies $f\in X$. The $F$-property for $\mathcal{B}_{\nu}^{p,q}$ is a direct consequence of Theorem~\ref{Thm:fI-SP}.
However, it is worth mentioning that if one just aims to
prove the $F$-property for $\mathcal{B}_{\nu}^{p,q}$, our argument is not maybe the
simplest one, taking into account the length of proofs of Theorem~\ref{Thm3} and its auxiliary results. Ideas for an alternative proof can be found,
for instance, in \cite[Section~5.8.3]{Pavlovic2014}.

A sequence $\{z_n\}\subset \D$ is said to be a zero set of $\mathcal{B}_{\nu}^{p,q}$ if there exists $f\in \mathcal{B}_{\nu}^{p,q}$
such that $\{z:f(z)=0\}=\{z_n\}$. Here each zero $z_n$ is repeated according to its multiplicity and function $f$ is not identically zero.
Applying Theorem~\ref{Thm:fI-SP}, we make some observations on zero sets of $\mathcal{B}_{\nu}^{p,p}$.
More precisely, we concentrate on the case where $\{z_n\}$ is separated, which means that there exists $\d=\d(\{z_n\})>0$ such that
$d(z_n,z_k)>\d$ for all $n\neq k$, where
    \begin{equation*}\label{separated}
    d(z,w)=\left|\frac{z-w}{1-\overline{z}w}\right|,\quad z,w\in \D,
    \end{equation*}
is the pseudo-hyperbolic distance between points $z$ and $w$.
Before these results some basic properties of Hardy spaces are recalled.

For $\{z_n\}\subset\D$ satisfying the Blaschke condition $\sum_n (1-|z_n|)<\infty$ and a point $\t \in [0,2\pi)$, the Blaschke product
with zeros $\{z_n\}$ is defined by
    \begin{equation*}\label{Eq:Blaschke}
    B(z)=e^{i\t} \prod_{n}\frac{|z_n|}{z_n}\frac{z_n-z}{1-\overline{z}_nz}, \quad z\in \D.
    \end{equation*}
For $z_n=0$, the interpretation $|z_n|/z_n=-1$ is used.
By factorization \cite[Theorem~2.5]{Duren1970} made by F.~Riesz, we know that any $f\in H^p$ for some fixed $p\in (0,\infty]$ can be represented in the form $f=Bg$, where
$B$ is a Blaschke product and $g\in H^p$ does not vanish in $\D$. More precisely, Beurling factorization \cite[Theorem~2.8]{Duren1970} asserts that $g$ is the product
of an outer function and a singular inner function
    $$
    S(z)=\exp\left(\int_{\T} \frac{z+\xi}{z-\xi}\, d\s(\xi)+i\t \right),\quad z\in\D,
    $$
where $\t \in [0,2\pi)$ is a constant and $\s$ a positive measure on $\T$, singular with respect to the Lebesgue measure.
Consequently, every zero set of $\mathcal{B}_{\nu}^{p,q}$ satisfies the Blaschke condition. With these preparations we are ready to state and prove the following result.

\begin{corollary}\label{Coro:zero}
Let $1\le p<\infty$, $\nu \in \RR \cap \DD_p$, and assume that $\{z_n\}$ is a finite union of separated sequences and zero set of $\mathcal{B}_{\nu}^{p,p}$.
Then there exists an outer function $O_{\phi} \in \mathcal{B}_{\nu}^{p,p}$ such that
    \begin{equation*}
    \begin{split}
    \sum_n |O_{\phi}(z_n)|^p \frac{\whw(z_n)}{(1-|z_n|)^{p-1}}<\infty.
    \end{split}
    \end{equation*}
\end{corollary}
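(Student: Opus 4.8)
The plan is to produce a single outer function that simultaneously handles every zero $z_n$, and then extract the claimed summability from membership in $\mathcal{B}_{\nu}^{p,p}$ via Theorem~\ref{Thm:fI-SP}. Since $\{z_n\}$ is a zero set of $\mathcal{B}_{\nu}^{p,p}$, there is some $g\in\mathcal{B}_{\nu}^{p,p}$ vanishing exactly on $\{z_n\}$; writing $g=B h$ with $B$ the Blaschke product over $\{z_n\}$ (the Blaschke condition holds by the discussion preceding the statement), the $F$-property for $\mathcal{B}_{\nu}^{p,p}$ — which follows from Theorem~\ref{Thm:fI-SP} — gives $h\in\mathcal{B}_{\nu}^{p,p}$, and $h$ does not vanish in $\D$. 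By the Beurling factorization $h=S O_\phi$ with $S$ a singular inner function; applying the $F$-property once more we may take $O_\phi\in\mathcal{B}_{\nu}^{p,p}$. Thus $BO_\phi\in\mathcal{B}_{\nu}^{p,p}$, and now Theorem~\ref{Thm:fI-SP} (applied with $f=O_\phi$, $I=B$, $q=p$) tells us precisely that
    \begin{equation*}
    \int_0^1\int_0^{2\pi}\left(\frac{|O_\phi(re^{it})|(1-|B(re^{it})|)}{1-r}\right)^p dt\,\nu(r)\,dr<\infty.
    \end{equation*}

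Next I would convert this integral bound into the desired sum over $\{z_n\}$. The key is a lower estimate for $1-|B(z)|$ on a hyperbolic disc centred at each $z_n$: when $\{z_n\}$ is a finite union of separated sequences, there is a fixed radius $\rho\in(0,1)$ and a constant $c>0$ such that $1-|B(z)|\ge c$ whenever $d(z,z_n)<\rho$ for some $n$, and moreover the discs $\Delta_n=\{z:d(z,z_n)<\rho/2\}$ can be grouped into finitely many subfamilies each consisting of pairwise disjoint discs (one subfamily per separated sequence in the union). On each $\Delta_n$ one has $1-|z|\asymp 1-|z_n|$, $\nu(r)\asymp$ its value near $|z_n|$, and — using subharmonicity of $|O_\phi|^p$ together with $\nu\in\RR$ and the area-versus-radius comparison — an estimate of the form
    \begin{equation*}
    |O_\phi(z_n)|^p\,(1-|z_n|)^{-1}\whw(z_n)
    \lesssim \int_{\Delta_n}\frac{|O_\phi(z)|^p}{(1-|z|)^p}\,\nu(z)\,dA(z),
    \end{equation*}
where I would use that $\whw(z_n)\asymp\int_{|z_n|}^{(1+|z_n|)/2}\nu(s)\,ds$ (Lemma~\ref{Lemma:DD}) to absorb the radial integration of $\nu$ and the factor $(1-|z_n|)$ from the angular direction, leaving the $(1-|z|)^{-p}$ weight to match the integrand above. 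Summing over $n$ within one disjoint subfamily and then over the finitely many subfamilies bounds $\sum_n|O_\phi(z_n)|^p\whw(z_n)(1-|z_n|)^{-1}$ by a constant times the finite integral from the previous paragraph; rewriting $(1-|z_n|)^{-1}$ as $(1-|z_n|)^{p-1}\cdot(1-|z_n|)^{-p}$ is not needed — the exponent in the statement is exactly $(1-|z_n|)^{-(p-1)}$, which comes out if one instead matches the weight so that one power of $(1-|z|)^{-1}$ is consumed converting area measure back (the disc $\Delta_n$ has area $\asymp(1-|z_n|)^2$, contributing one factor $(1-|z_n|)^{1}$, and $dt$-integration at fixed $r$ over the angular extent $\asymp 1-|z_n|$ another), so that passing between the line integral in Theorem~\ref{Thm:fI-SP} and the point evaluations costs $(1-|z_n|)^{-1}$ total, yielding the stated $(1-|z_n|)^{p-1}$ in the denominator after dividing the $(1-r)^{-p}$ in the integrand.

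The main obstacle is the geometric/potential-theoretic step: proving $1-|B(z)|\gtrsim 1$ near each $z_n$ for a finite union of separated sequences (this is where the hypothesis is used, and it is essential — for a single separated sequence it is standard, but one must check the constants survive the finite union, possibly after passing to a slightly smaller $\rho$), and then carefully bookkeeping the powers of $(1-|z_n|)$ when one replaces the mixed line integral $\int_0^{2\pi}(\cdot)^p\,dt$ by area integrals over the $\Delta_n$ via subharmonicity, so that the weight $\whw(z_n)/(1-|z_n|)^{p-1}$ emerges with the correct exponent rather than $\whw(z_n)/(1-|z_n|)^{p}$ or $\whw(z_n)/(1-|z_n|)^{p-2}$. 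The use of $\nu\in\RR$ enters exactly to guarantee that $\whw$ is comparable on the Whitney-type region $\Delta_n$ to its value at the centre and that the radial integral of $\nu$ over the top half of $\Delta_n$ is $\asymp\whw(z_n)$, which Lemmas~\ref{Lemma:DD} and~\ref{Lemma:DDD} supply.
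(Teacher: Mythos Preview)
Your overall strategy matches the paper's: produce $O_\phi\in\mathcal{B}_\nu^{p,p}$ with $BO_\phi\in\mathcal{B}_\nu^{p,p}$ via the $F$-property, invoke Theorem~\ref{Thm:fI-SP}, and then localize to the pseudo-hyperbolic discs $\Delta_n$ using subharmonicity of $|O_\phi|^p$ and the trivial bound $|B(z)|\le d(z,z_n)\le\delta$ on $\Delta(z_n,\delta)$ (your worry about this last point is unfounded: a single Blaschke factor already gives it, no separation needed).

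There is, however, a genuine gap in the localization. Applying Theorem~\ref{Thm:fI-SP} with the weight $\nu$ yields
\[
\int_\D\Bigl(\tfrac{|O_\phi(z)|(1-|B(z)|)}{1-|z|}\Bigr)^p\nu(|z|)\,dA(z)<\infty,
\]
but the subharmonic mean-value inequality combined with the fact that $\widehat{\nu}$ (not $\nu$) is essentially constant on $\Delta_n$ gives
\[
|O_\phi(z_n)|^p\,\frac{\widehat{\nu}(z_n)}{(1-|z_n|)^{p-1}}
\ \lesssim\ \int_{\Delta_n}\Bigl(\tfrac{|O_\phi(z)|(1-|B(z)|)}{1-|z|}\Bigr)^p\frac{\widehat{\nu}(z)}{1-|z|}\,dA(z).
\]
The right-hand side carries the weight $\widehat{\nu}(z)/(1-|z|)$, not $\nu(z)$. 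For general $\nu\in\RR$ these are \emph{not} pointwise comparable: $\nu$ may vanish on subintervals of $[r_1^n,r_2^n]$ while $\widehat{\nu}$ stays positive. Your attempt to ``absorb the radial integration of $\nu$'' tacitly factors $|O_\phi|^p$ out of $\int_{\Delta_n}|O_\phi|^p\nu\,dA$ as if it were constant on $\Delta_n$; subharmonicity only controls the unweighted area mean and does not justify this. Lemma~\ref{Lemma:asymp} does equate the two weighted norms, but only for analytic (or at least radially monotone) integrands, which $\tfrac{|O_\phi|(1-|B|)}{1-|z|}$ is not.

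The paper closes this gap by switching weights \emph{before} invoking Theorem~\ref{Thm:fI-SP}: set $\psi(z)=\widehat{\nu}(z)/(1-|z|)$, check via Lemmas~\ref{Lemma:DD}(ii), \ref{Lemma:DDD} and an integration by parts that $\psi\in\RR\cap\DD_p$ with $\widehat{\psi}\asymp\widehat{\nu}$, and use Lemma~\ref{Lemma:asymp} on the \emph{analytic} function $(BO_\phi)'$ to conclude $BO_\phi\in\mathcal{B}_\psi^{p,p}$. Theorem~\ref{Thm:fI-SP} applied with $\psi$ then delivers the integral with the correct weight $\widehat{\nu}(z)/(1-|z|)$, and your localization goes through cleanly with the exponent $(1-|z_n|)^{p-1}$ falling out automatically.
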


\begin{proof}
Let $\{z_n\}=\bigcup_{j=1}^M \{z_n^j\}$, where $M\in \N$ and each $\{z_n^j\}$ is separated.
Let $B$ be the Blaschke product with zeros $\{z_n\}$, $S$ a singular inner function and $O_{\phi}$ an outer function such that $BSO_{\phi}\in \mathcal{B}_{\nu}^{p,q}$.
By Theorem~\ref{Thm:fI-SP}, we know that $O_{\phi}$ and $BO_{\phi}$ belong to $\mathcal{B}_{\nu}^{p,p}$.
For $w\in \D$ and $0<r<1$, set
    $$\Delta(w,r)=\{z:d(z,w)<r\} \quad \text{and} \quad \Lambda(w,r)=\{z:|w-z|<r(1-|w|)\}.$$
Since each $\{z_n^j\}$ is separated, we find $R_j,\d_j\in (0,1)$
such that, for a fixed $j$, discs $\Lambda(z_n^j, R_j)$ are pairwise disjoint and the inclusion
    $\Delta(z_n^j, \d_j) \subset \Lambda(z_n^j, R_j)$
is valid for every $n$.
Hence $\whw$ is essentially constant in each disc $\Delta(z_n^j, \d_j)$ by Lemma~\ref{Lemma:DD}(ii).
Moreover,
    $$|B(z)|\le \left|\frac{z_n^j-z}{1-\overline{z}_n^j z}\right|\le \d_j, \quad  z\in \Delta(z_n^j, \d_j).$$
Using these facts together with the subharmonicity of $|O_{\phi}|^p$, we obtain
    \begin{equation}\label{Eq:outer-lemma-B-est1}
    \begin{split}
    &\sum_n |O_{\phi}(z_n)|^p \frac{\whw(z_n)}{(1-|z_n|)^{p-1}}=\sum_{j=1}^{M}\sum_n |O_{\phi}(z_n^j)|^p \frac{\whw(z_n^j)}{(1-|z_n^j|)^{p-1}} \\
    &\quad\lesssim \sum_{j=1}^{M} \sum_n \int_{\Delta(z_n^j, \d_j)} |O_{\phi}(z)|^p\,dA(z) \frac{\whw(z_n^j)}{(1-|z_n^j|)^{p+1}} \\
    &\quad \asymp \sum_{j=1}^{M} \sum_n \int_{\Delta(z_n^j, \d_j)} |O_{\phi}(z)|^p \frac{\whw(z)}{(1-|z|)^{p+1}} \,dA(z) \\
    &\quad \le \sum_{j=1}^{M} (1-\d_j)^{-p}\sum_n \int_{\Delta(z_n^j, \d_j)}\left(\frac{|O_{\phi}(z)|(1-|B(z)|)}{1-|z|}\right)^p \frac{\whw(z)}{1-|z|} \,dA(z) \\
    &\quad \lesssim \int_\D \left(\frac{|O_{\phi}(z)|(1-|B(z)|)}{1-|z|}\right)^p \frac{\whw(z)}{1-|z|} \,dA(z),
    \end{split}
    \end{equation}
where $dA(z)$ is the two-dimensional Lebesgue measure.
Now it suffices to show that the last integral in \eqref{Eq:outer-lemma-B-est1} is finite.

Set $\psi(z)=\widehat{\nu}(z)/(1-|z|)$ for $z\in \D$. Note that $\widehat{\nu}(r)\asymp \widehat{\psi}(r)$ for $0\le r<1$
by Lemmas~\ref{Lemma:DD}(ii)~and~\ref{Lemma:DDD}. Moreover, integrating by parts, one can show that $\nu\in \DD_p$ if and only if
    \begin{equation}\label{Eq:DDp-hat}
    \begin{split}
    \frac{(1-r)^p}{\widehat{\nu}(r)}\int_0^r \frac{\widehat{\nu}(s)}{(1-s)^{p+1}}\,ds\asymp 1, \quad r \rightarrow 1^-.
    \end{split}
    \end{equation}
In particular, $\psi \in \RR \cap \DD_p$ by the hypotheses of $\nu$.
Since Lemma~\ref{Lemma:asymp} implies $BO_{\phi}$ in $\mathcal{B}_{\psi}^{p,p}$,
Theorem~\ref{Thm:fI-SP} gives
    $$\int_\D \left(\frac{|O_{\phi}(z)|(1-|B(z)|)}{1-|z|}\right)^p \frac{\whw(z)}{1-|z|} \,dA(z)<\infty.$$
This completes the proof.
\end{proof}

Recall that a sequence $\{z_n\}\subset \D$ is said to be uniformly separated if
    $$
    \inf_{n\in\N}\prod_{k\ne n}\left|\frac{z_k-z_n}{1-\overline{z}_kz_n}\right|>0;
    $$
and a finite union of uniformly separated sequences is called a Carleson-Newman sequence.
It is worth mentioning that any Carleson-Newman sequence is a finite union of separated sequences satisfying the Blaschke condition,
but the converse statement is not true.
For $1<p<\infty$, $p-2<\a<p-1$ and a Carleson-Newman sequence $\{z_n\}$, we can give a sufficient and necessary condition for $\{z_n\}$
to be a zero set of $\mathcal{B}_{\a}^{p,p}$.
This is a straightforward consequence of Theorem~\ref{Thm:fI-SP}, Corollary~\ref{Coro:zero} and the reasoning made in paper \cite{ABP2009} by N.~Arcozzi, D.~Blasi and J.~Pau.

\begin{corollary}\label{Coro:zero2}
Let $1<p<\infty$, $p-2<\a<p-1$ and $\{z_n\}$ be a Carleson-Newman sequence.
Then $\{z_n\}$ is a zero set of $\mathcal{B}_{\a}^{p,p}$ if and only if there exists an outer function $O_{\phi}\in \mathcal{B}_{\a}^{p,p}$ such that
    \begin{equation}\label{Eq:class-outer-cond}
    \begin{split}
    \sum_n |O_{\phi}(z_n)|^p (1-|z_n|)^{\a+2-p}<\infty.
    \end{split}
    \end{equation}
\end{corollary}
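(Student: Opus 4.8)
\medskip

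Since $\nu(z)=(1-|z|)^{\a}$ with $-1<p-2<\a<p-1$, we have $\widehat{\nu}(z)=\tfrac{1}{\a+1}(1-|z|)^{\a+1}$, so $\widehat{\nu}(z_n)/(1-|z_n|)^{p-1}\asymp(1-|z_n|)^{\a+2-p}$ and $\nu\in\RR\cap\DD_p$; in particular \eqref{Eq:class-outer-cond} is exactly the condition occurring in Corollary~\ref{Coro:zero}. The plan is to treat the two implications separately. For the necessity, recall that a Carleson--Newman sequence is a finite union of uniformly---hence ordinarily---separated sequences and satisfies the Blaschke condition, as noted above; hence Corollary~\ref{Coro:zero} applies directly and produces an outer function $O_{\phi}\in\mathcal{B}_{\a}^{p,p}$ for which $\sum_n|O_{\phi}(z_n)|^p(1-|z_n|)^{\a+2-p}<\infty$, which is \eqref{Eq:class-outer-cond}.

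For the sufficiency, suppose $O_{\phi}\in\mathcal{B}_{\a}^{p,p}$ is outer and satisfies \eqref{Eq:class-outer-cond}. Let $B$ be the Blaschke product with zeros $\{z_n\}$ (well defined, since a Carleson--Newman sequence satisfies the Blaschke condition) and put $f=BO_{\phi}$. As $O_{\phi}$ is zero-free, $\{z:f(z)=0\}=\{z_n\}$, so it remains to check that $f\in\mathcal{B}_{\a}^{p,p}$. Since $B$ is inner and $O_{\phi}\in\mathcal{B}_{\a}^{p,p}$, Theorem~\ref{Thm:fI-SP} with $q=p$ reduces this---after the usual passage from polar integration to $dA$, the integrand being bounded near the origin---to proving the finiteness of
\[
I:=\int_{\D}|O_{\phi}(z)|^p\bigl(1-|B(z)|\bigr)^p(1-|z|)^{\a-p}\,dA(z).
\]

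To estimate $I$ I would follow the reasoning of \cite{ABP2009}. Writing $b_{z_n}$ for the Blaschke factors of $B$, so that $|b_{z_n}(z)|=d(z,z_n)$, one has $1-|B(z)|=1-\prod_n|b_{z_n}(z)|\le\sum_n\bigl(1-|b_{z_n}(z)|^2\bigr)\asymp\sum_n\frac{(1-|z_n|)(1-|z|)}{|1-\overline{z}_nz|^2}$, and since $(1-|B(z)|)^p\le 1-|B(z)|$ (as $p\ge1$) this yields
\[
I\lesssim\sum_n(1-|z_n|)\int_{\D}|O_{\phi}(z)|^p\frac{(1-|z|)^{\a+1-p}}{|1-\overline{z}_nz|^2}\,dA(z).
\]
The crux is then the one-point estimate
\[
\int_{\D}|O_{\phi}(z)|^p\frac{(1-|z|)^{\a+1-p}}{|1-\overline{z}_nz|^2}\,dA(z)\lesssim|O_{\phi}(z_n)|^p(1-|z_n|)^{\a+1-p}+E_n,
\]
where the nonnegative quantities $E_n$ satisfy $\sum_n(1-|z_n|)E_n\lesssim\int_{\D}|O_{\phi}'(w)|^p(1-|w|)^{\a}\,dA(w)$. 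I would prove this by splitting $\D$ into the pseudohyperbolic disc $\{z:d(z,z_n)<\tfrac{1}{2}\}$ and the dyadic annuli $\{2^k(1-|z_n|)\le|1-\overline{z}_nz|<2^{k+1}(1-|z_n|)\}$ about $z_n$, on each piece estimating $|O_{\phi}|^p$ by $|O_{\phi}(z_n)|^p$ plus an increment that is controlled---via the subharmonicity of $|O_{\phi}'|^p$---by a local integral of $|O_{\phi}'|^p$; the ensuing geometric series converges precisely because the hypothesis $p-2<\a<p-1$ forces the exponent $\a+1-p$ into $(-1,0)$, and the increments accumulate into $E_n$ with the asserted summability (here the Carleson property of Carleson--Newman sequences is used). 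Inserting this into the bound for $I$ and summing over $n$, the main terms contribute $\sum_n|O_{\phi}(z_n)|^p(1-|z_n|)^{\a+2-p}<\infty$ by \eqref{Eq:class-outer-cond}, while the error contributes $\lesssim\int_{\D}|O_{\phi}'(w)|^p(1-|w|)^{\a}\,dA(w)\asymp\|O_{\phi}'\|_{A_{\a}^{p,p}}^p<\infty$ because $O_{\phi}\in\mathcal{B}_{\a}^{p,p}$. Hence $I<\infty$, $f=BO_{\phi}\in\mathcal{B}_{\a}^{p,p}$, and $\{z_n\}$ is a zero set of $\mathcal{B}_{\a}^{p,p}$.

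I expect the one-point estimate---in particular, the bookkeeping that absorbs the increments of $O_{\phi}$ across the dyadic annuli into $\|O_{\phi}'\|_{A_{\a}^{p,p}}^p$ once they are summed against the balayage kernels---to be the only real obstacle. It is exactly at this step that both hypotheses $p-2<\a<p-1$ and $O_{\phi}\in\mathcal{B}_{\a}^{p,p}$ (not merely $O_{\phi}\in H^p$) are needed, and it is essentially the content of the arguments in \cite{ABP2009}.
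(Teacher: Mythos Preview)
Your proposal is correct and follows essentially the same route as the paper: both directions invoke Corollary~\ref{Coro:zero} and Theorem~\ref{Thm:fI-SP} respectively, and for the sufficiency both reduce to the same integral, bound $(1-|B|)^p$ by the balayage sum $\sum_n\frac{(1-|z_n|)(1-|z|)}{|1-\overline{z}_nz|^2}$, and then appeal to the argument of \cite[Proposition~3.2]{ABP2009}. The only cosmetic difference is that the paper splits $|O_\phi(z)|^p\lesssim|O_\phi(z_n)|^p+|O_\phi(z)-O_\phi(z_n)|^p$ first and then cites \cite[Lemma~2.1]{BP2008} for the difference term, whereas you package the same computation as a single ``one-point estimate'' obtained via dyadic annuli---this is precisely the content of that lemma.
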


\begin{proof}
Let $B$ be the Blaschke product with zeros $\{z_n\}$ and $O_{\phi}\in \mathcal{B}_{\a}^{p,p}$ an outer function satisfying \eqref{Eq:class-outer-cond}.
Then \cite[Theorem~3.5]{Mashreghi2013} together with some elementary calculations gives
    \begin{equation}\label{Eq:ABP-est}
    \begin{split}
    &\int_\D \left(\frac{|O_{\phi}(z)|(1-|B(z)|)}{1-|z|}\right)^p (1-|z|)^{\a}\,dA(z) \\
    &\quad \le 2\int_\D |O_{\phi}(z)|^p \sum_n \frac{1-|z_n|^2}{|1-\overline{z}_nz|^2} (1-|z|)^{\a+1-p}\,dA(z) \\
    &\quad \lesssim \sum_n \int_\D |O_{\phi}(z_n)|^p \frac{1-|z_n|^2}{|1-\overline{z}_nz|^2} (1-|z|)^{\a+1-p}\,dA(z) \\
    &\quad \quad + \sum_n \int_\D |O_{\phi}(z)-O_{\phi}(z_n)|^p\frac{1-|z_n|^2}{|1-\overline{z}_nz|^2} (1-|z|)^{\a+1-p}\,dA(z) \\
    &\quad =:\mathcal{I}_1+\mathcal{I}_2.
    \end{split}
    \end{equation}
Following the reasoning in the proof of \cite[Proposition~3.2]{ABP2009}, it is easy to check that $\mathcal{I}_1$ and $\mathcal{I}_2$ are finite.
More precisely, estimating in a natural manner, one can show
    $$\mathcal{I}_1 \lesssim \sum_n |O_{\phi}(z_n)|^p (1-|z_n|)^{\a+2-p}<\infty.$$
In the argument of
    $\mathcal{I}_2\lesssim \|O_{\phi}'\|_{A_\a^{p,p}}^p<\infty$,
\cite[Lemma~2.1]{BP2008} and the hypothesis that $\{z_n\}$ is a Carleson-Newman sequence play key roles.

Since $O_{\phi}\in \mathcal{B}_{\a}^{p,p}$ and the first integral in \eqref{Eq:ABP-est} is finite, $BO_{\phi}$ belongs to $\mathcal{B}_{\a}^{p,p}$ by Theorem~\ref{Thm:fI-SP}.
Consequently, the implication $\Leftarrow$ is valid.
The converse implication is a direct consequence of Corollary~\ref{Coro:zero}. Hence the proof is complete.
\end{proof}

It is an open problem to prove a $\mathcal{B}_{\nu}^{p,p}$ counterpart of Corollary~\ref{Coro:zero2}.
One could try prove such result, for instance, assuming $\nu \in \RR \cap \DD_p$ and
    \begin{equation*}
    \sup_{0\le r<1}\frac{(1-r)^{p-1}}{\widehat{\nu}(r)}\int_r^1\frac{\nu(s)}{(1-s)^{p-1}}\,ds<\infty.
    \end{equation*}
In this case, the implication $\Leftarrow$ is the problematic part.
An idea to approach this problem is to follow the argument of \cite[Proposition~3.2]{ABP2009} and aim to apply therein
\cite[Theorem~3.1]{AC2009} instead of \cite[Lemma~2.1]{BP2008}. The down side of this method is that it leads to laborious computations of Bekoll\'e-Bonami weights.

Corollaries~\ref{Coro:zero}~and~\ref{Coro:zero2} are related to some main results in \cite{PP2011} by J.~Pau and J.~A.~ Pel\'aez.
In particular, the equivalence $\rm (i) \Leftrightarrow (ii)$ in \cite[Theorem~1]{PP2011}
follows from Corollary~\ref{Coro:zero2} by setting $p=2$.
Moreover, Corollary~\ref{Coro:zero} shows that the implication $\rm (i) \Rightarrow (ii)$ in \cite[Theorem~1]{PP2011}
is valid also if $\{z_n\}$ in the statement is a finite union of separated sequences.
Applying the last observation, we can also replace a Carleson-Newman sequence in \cite[Corollary~1]{PP2011} by a finite union of separated sequences:
If $0<\a<1$, $\{z_n\}$ is a finite union separated sequences and zero set of $\mathcal{B}_{\a}^{2,2}$, then
    $$\int_0^{2\pi} \log\left(\sum_n \frac{(1-|z_n|)^{\a+1}}{|e^{i\t}-z_n|^2}\right)\,d\t<\infty.$$
This result offers a practical way to construct Blaschke sequences which are not zero sets of $\mathcal{B}_{\a}^{2,2}$;
see \cite[Theorem~2]{PP2011} and its proof.

Note that \eqref{Eq:outer-lemma-B-est1} and \eqref{Eq:ABP-est} together with the estimates for $\mathcal{I}_1$ and $\mathcal{I}_2$
are valid also if outer function $O_\phi$ is replaced
by an arbitrary $f\in H^p$. Using this observation and Theorem~\ref{Thm:fI-SP}, we can rewrite Corollary~\ref{Coro:zero2} in the following form.

\begin{corollary}\label{Coro:B-fi-SP}
Let $1<p<\infty$, $p-2<\a<p-1$, $f\in H^p$
and $B$ be a Blaschke product associated with a Carleson-Newman sequence $\{z_n\}$.
Then $fB\in \mathcal{B}_{\a}^{p,p}$ if and only if $f\in \mathcal{B}_{\a}^{p,p}$ and
    \begin{equation*}
    \begin{split}
    \sum_n |f(z_n)|^p (1-|z_n|)^{\a+2-p}<\infty.
    \end{split}
    \end{equation*}
\end{corollary}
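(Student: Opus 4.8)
The plan is to read the corollary directly off Theorem~\ref{Thm:fI-SP}, recycling with an arbitrary $f\in H^p$ in place of the outer function $O_\phi$ the two chains of estimates \eqref{Eq:outer-lemma-B-est1} and \eqref{Eq:ABP-est} that already appeared in the proofs of Corollaries~\ref{Coro:zero} and~\ref{Coro:zero2}. Since $q=p$ and $\nu(z)=(1-|z|)^\a$ with $p-2<\a<p-1$ lies in $\RR\cap\DD_p$, Theorem~\ref{Thm:fI-SP} says that $fB\in\mathcal{B}_\a^{p,p}$ if and only if $f\in\mathcal{B}_\a^{p,p}$ and
$$
J:=\int_\D\left(\frac{|f(z)|\,(1-|B(z)|)}{1-|z|}\right)^p(1-|z|)^\a\,dA(z)<\infty,
$$
where I have rewritten the polar integral appearing in the statement of Theorem~\ref{Thm:fI-SP} as an area integral; this is harmless because $r\asymp1$ near $\T$ and the integrand is bounded near the origin (here one uses $\a>p-2>-1$). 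Thus, granting $f\in\mathcal{B}_\a^{p,p}$, it remains only to show that $J<\infty$ if and only if $\sum_n|f(z_n)|^p(1-|z_n|)^{\a+2-p}<\infty$.

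For sufficiency I would copy the computation \eqref{Eq:ABP-est} with $O_\phi$ replaced by $f$: the pointwise bound $1-|B(z)|\le 2(1-|z|)\sum_n\frac{1-|z_n|^2}{|1-\overline{z}_nz|^2}$ coming from \cite[Theorem~3.5]{Mashreghi2013} together with the subharmonicity of $|f|^p$ yields $J\lesssim\mathcal{I}_1+\mathcal{I}_2$, and neither step uses that $f$ is outer. Then $\mathcal{I}_1\lesssim\sum_n|f(z_n)|^p(1-|z_n|)^{\a+2-p}$ follows from the Forelli--Rudin type asymptotics $\int_\D\frac{1-|z_n|^2}{|1-\overline{z}_nz|^2}(1-|z|)^{\a+1-p}\,dA(z)\asymp(1-|z_n|)^{\a+2-p}$, valid precisely because $p-2<\a<p-1$, while $\mathcal{I}_2\lesssim\|f'\|_{A_\a^{p,p}}^p<\infty$ follows from \cite[Lemma~2.1]{BP2008} exactly as in the proof of \cite[Proposition~3.2]{ABP2009}, using that $\{z_n\}$ is Carleson--Newman. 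Hence the two hypotheses on $f$ force $J<\infty$, and Theorem~\ref{Thm:fI-SP} gives $fB\in\mathcal{B}_\a^{p,p}$.

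For necessity I would copy the computation \eqref{Eq:outer-lemma-B-est1}, again with $f$ in place of $O_\phi$. Writing $\{z_n\}=\bigcup_{j=1}^M\{z_n^j\}$ as a finite union of separated sequences and choosing $R_j,\d_j\in(0,1)$ so that, for each fixed $j$, the discs $\Lambda(z_n^j,R_j)$ are pairwise disjoint and $\Delta(z_n^j,\d_j)\subset\Lambda(z_n^j,R_j)$ for every $n$, one has that $\widehat{\nu}$ is essentially constant on each $\Delta(z_n^j,\d_j)$ by Lemma~\ref{Lemma:DD}(ii) and that $|B(z)|\le\d_j$ there; combining this with the subharmonicity of $|f|^p$ and $\widehat{\nu}(z)\asymp(1-|z|)^{\a+1}$ yields
$$
\sum_n|f(z_n)|^p(1-|z_n|)^{\a+2-p}\asymp\sum_n|f(z_n)|^p\frac{\widehat{\nu}(z_n)}{(1-|z_n|)^{p-1}}\lesssim J.
$$
Since $fB\in\mathcal{B}_\a^{p,p}$ forces $f\in\mathcal{B}_\a^{p,p}$ and $J<\infty$ by Theorem~\ref{Thm:fI-SP}, the sum is finite, which completes the argument. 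I do not expect any genuine obstacle here, since the heavy lifting is already contained in Theorem~\ref{Thm:fI-SP} and in the proofs of Corollaries~\ref{Coro:zero} and~\ref{Coro:zero2}; the only points that deserve a line of care are the passage between the polar integral of Theorem~\ref{Thm:fI-SP} and $J$, and the verification that the exponent $\a+1-p$ lies in the admissible range for the integral estimate bounding $\mathcal{I}_1$, both of which are guaranteed by the standing hypothesis $p-2<\a<p-1$.
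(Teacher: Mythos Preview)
Your proposal is correct and follows essentially the same route as the paper: the paper explicitly remarks that \eqref{Eq:outer-lemma-B-est1} and \eqref{Eq:ABP-est} together with the bounds on $\mathcal{I}_1,\mathcal{I}_2$ remain valid when the outer function $O_\phi$ is replaced by an arbitrary $f\in H^p$, and then invokes Theorem~\ref{Thm:fI-SP}. One small slip: in the sufficiency step you cite the subharmonicity of $|f|^p$, but what is actually used there is $(1-|B|)^p\le 1-|B|$ and the elementary inequality $|a+b|^p\lesssim |a|^p+|b|^p$; subharmonicity is only needed in the necessity step via \eqref{Eq:outer-lemma-B-est1}.
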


Corollary~\ref{Coro:B-fi-SP} is a partial improvement of the main result in M.~Jevti\'c's paper \cite{Jevtic1990}.
More precisely, this paper contains an extended counterpart of Corollary~\ref{Coro:B-fi-SP} (in the sense of $p$ and $q$) with the defect $f \equiv 1$.
It is also worth mentioning that Corollary~\ref{Coro:B-fi-SP} is not valid if the Carleson-Newman sequence $\{z_n\}$ is replaced by an arbitrary Blaschke sequence.
This can be shown by studying the case where $f\equiv 1$ and $B$ is a Blaschke product with zeros on the positive real axis.
More precisely, the counter example follows from \cite[Theorem~1]{R2016}, which asserts that all such Blaschke products
belong to $\mathcal{B}_{\a}^{p,p}$ for $1/2<p<\infty$ and $p-3/2<\a<\infty$.

Theorem~\ref{Thm:fI-SP} for $f\equiv 1$ (or Theorem~\ref{Thm3} for inner functions) has also extended counterpart \cite[Theorem~1]{R2017b}.

\begin{lettertheorem}\label{Thm:SP}
Let $0<p,q<\infty$ and $\nu \in \RR$. Then $\nu \in \DD_q$ if and only if
    \begin{equation*}
     \|I'\|_{A^{p,q}_\nu}^q \asymp \int_0^1 \left(\int_0^{2\pi} \left(\frac{1-|I(re^{i\t})|}{1-r}\right)^p d\t\right)^{q/p} \nu(r)\, dr
    \end{equation*}
for all inner functions $I$. Here the comparison constants may depend only on $p$, $q$ and $\nu$.
\end{lettertheorem}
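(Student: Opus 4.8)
The plan is to treat the two implications separately, noting first that the \emph{easy half} of the displayed comparison, the inequality $\|I'\|_{A^{p,q}_\nu}^q\lesssim\int_0^1\bigl(\int_0^{2\pi}((1-|I(re^{i\t})|)/(1-r))^p\,d\t\bigr)^{q/p}\nu(r)\,dr$, holds for \emph{every} weight $\nu$ and with an absolute comparison constant: by the Schwarz--Pick lemma $|I'(z)|\le(1-|I(z)|^2)/(1-|z|^2)\le 2(1-|I(z)|)/(1-|z|)$ for all $z\in\D$, so raising this to the power $p$, integrating in $\t$, raising to the power $q/p$ and integrating against $\nu(r)\,dr$ gives the claim. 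Hence the content of the theorem is (a) the reverse comparison under the hypothesis $\nu\in\DD_q$, and (b) that the comparison, holding for \emph{all} inner functions, forces $\nu\in\DD_q$.

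For part (a) the starting point is the pointwise bound
\[
1-|I(re^{i\t})|\le\int_r^1|I'(se^{i\t})|\,ds\qquad\text{for a.e.\ }\t\in[0,2\pi),
\]
valid for every fixed $r$: the radial limit $\lim_{s\to1^-}|I(se^{i\t})|$ equals $1$ for a.e.\ $\t$, while $I(\rho e^{i\t})-I(re^{i\t})=\int_r^\rho e^{i\t}I'(se^{i\t})\,ds$ for $r<\rho<1$. Inserting this, for $1\le p<\infty$ Minkowski's integral inequality in the $L^p(d\t)$ norm reduces the right-hand side of the comparison to a constant times $\int_0^1\bigl(\int_r^1 M_p(s,I')\,ds\bigr)^q\,\nu(r)(1-r)^{-q}\,dr$; and since $s\mapsto M_p(s,I')$ is non-decreasing by Hardy's convexity theorem, what remains is the one-dimensional weighted Hardy inequality
\[
\int_0^1\Bigl(\int_r^1 g(s)\,ds\Bigr)^q\frac{\nu(r)}{(1-r)^q}\,dr\lesssim\int_0^1 g(s)^q\,\nu(s)\,ds
\]
for non-decreasing $g\ge0$. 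This I would prove directly from $\nu\in\DD_q$: split $[r,1]$ into the dyadic intervals on which $1-s$ halves, use monotonicity of $g$ to discretize $\int_r^1 g$, and sum using the equivalences of Lemmas~\ref{Lemma:DD} and~\ref{Lemma:DDD} together with \eqref{Eq:weight-scal}; the self-improvement Lemma~\ref{Lemma:self-imp} supplies the exponent gap that makes the resulting geometric series converge. For $0<p<1$, where Minkowski's inequality is unavailable, I would instead dominate $\int_r^1|I'(se^{i\t})|\,ds$ using the sub-mean value property of the subharmonic function $|I'|^p$ over pseudohyperbolic discs, reducing again to the same monotone Hardy inequality. (For $1\le p<\infty$ one could alternatively read off (a), up to the harmless additive constant coming from $\|I\|_{H^p}=1$, from Theorem~\ref{Thm3} applied to $f=I$, using that $F_2(I)=0$ since $|I|\equiv1$ a.e.\ on $\T$; but the direct route above avoids that detour and covers $p<1$.)

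For part (b) I argue by contraposition: assume $\nu\in\RR\setminus\DD_q$. Because the easy comparison always holds, breaking the equivalence means producing an inner function $I$ with $\|I'\|_{A^{p,q}_\nu}^q<\infty$ but $\int_0^1\bigl(\int_0^{2\pi}((1-|I(re^{i\t})|)/(1-r))^p\,d\t\bigr)^{q/p}\,\nu(r)\,dr=\infty$. The obvious candidates --- a single Blaschke factor, the atomic singular inner function, a finite Blaschke product with equispaced zeros on one circle --- do \emph{not} help, since for each of them the two sides stay comparable regardless of $\nu$, so a genuine construction is required. I would use a Borichev-type inner function \cite{Borichev2013}: this is exactly the mechanism behind Example~\ref{Example1}, which settles the borderline weight $\nu(z)=(1-|z|)^{p-1}$ with $q=p$. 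One arranges the sublevel set $\{z:|I(z)|<\e\}$ to be so large, in the weighting relevant to the displayed integral, that this integral diverges, while the failure of $\nu\in\DD_q$ --- which by the test function $g=\mathbf{1}_{[1-\e,1]}$ is precisely the failure of the monotone Hardy inequality above --- leaves just enough room for $\|I'\|_{A^{p,q}_\nu}^q$ to remain finite.

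I expect the two genuine difficulties to be: in (a), the monotone weighted Hardy inequality together with the $0<p<1$ replacement of Minkowski's inequality by a subharmonicity argument --- this is where $\nu\in\DD_q$ is used essentially --- and in (b), carrying out the inner-function construction for a general doubling weight rather than for a power weight, since, as noted, none of the elementary test inner functions detects the failure of $\DD_q$.
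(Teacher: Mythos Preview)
First, note that this theorem is not proved in the present paper: it is quoted from \cite{R2017b}. That said, the paper's own proof of Theorem~\ref{Thm1} already contains both the test-function argument and the Hardy-type inequality that drive the result, so a comparison is still meaningful.

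Your part~(a) is essentially correct for $1\le p<\infty$ and matches the argument given here for Theorem~\ref{Thm1}: the pointwise bound $1-|I(re^{i\t})|\le\int_r^1|I'(se^{i\t})|\,ds$, Minkowski in $L^p(d\t)$, and then the monotone Hardy inequality $\int_0^1\bigl(\int_r^1 g\bigr)^q\nu(r)(1-r)^{-q}\,dr\lesssim\int_0^1 g^q\nu$ for non-decreasing $g$, which is exactly what is established in Section~\ref{Sec:5} (split into the cases $q\le1$ via Lemma~\ref{Lemma:max-iq} and $q>1$ via H\"older plus Lemma~\ref{Lemma:self-imp}). For $0<p<1$ your ``subharmonicity over pseudohyperbolic discs'' sketch is too vague; the natural route is again Lemma~\ref{Lemma:max-iq}, now applied in the $s$-variable to $g(s)=|I'(se^{i\t})|$, followed by the Hardy--Littlewood radial maximal estimate $\int_0^{2\pi}\sup_{0\le x\le s}|I'(xe^{i\t})|^p\,d\t\lesssim M_p^p(s,I')$ (apply it to the dilate $z\mapsto I'(sz)$), which brings you back to the same monotone Hardy inequality.

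Your part~(b), however, contains a genuine gap. You assert that ``the obvious candidates \ldots\ do not help'' and reach for a Borichev-type construction. This is unnecessary and, as you concede, hard to carry out for a general $\nu\in\RR\setminus\DD_q$. You have overlooked the most obvious family of inner functions: $I_n(z)=z^n$. These are precisely the test functions used in the necessity half of Theorem~\ref{Thm1} in this paper. Since $(1-r^n)/(1-r)\asymp\min\{n,(1-r)^{-1}\}$, one computes directly
\[
\int_0^1\left(\int_0^{2\pi}\Bigl(\frac{1-|I_n(re^{i\t})|}{1-r}\Bigr)^p d\t\right)^{q/p}\nu(r)\,dr
\asymp \int_0^{1-1/n}\frac{\nu(r)}{(1-r)^q}\,dr+n^q\,\whw(1-1/n),
\]
while $\|I_n'\|_{A^{p,q}_\nu}^q\asymp n^q\,\whw(1-1/n)$ by Lemma~\ref{Lemma:DD}(iv). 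Hence the ratio of the two sides is $\asymp 1+\frac{(1-s)^q}{\whw(s)}\int_0^s\frac{\nu(r)}{(1-r)^q}\,dr$ at $s=1-1/n$, and this is unbounded in $n$ precisely when $\nu\notin\DD_q$. No single pathological inner function is needed; a \emph{family} with unbounded ratio suffices, because the comparison constants are required to be uniform in $I$. Your observation that a \emph{single} M\"obius factor fails (since for it $1-|I|\asymp(1-|z|)|I'|$ pointwise) is correct, but it led you past the right answer.
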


Theorem~\ref{Thm:SP} confirms that the hypothesis $\nu \in \DD_q$ in Theorems~\ref{Thm3}~and~\ref{Thm:fI-SP} is sharp in a certain sense.
Studying the argument of this result in \cite{R2017b}, we can also deduce
that the proof of Theorem~\ref{Thm3} is more straightforward when $f$ is an inner function, and the statement is valid for all $0<p<\infty$.
It is also worth mentioning that results like Theorem~\ref{Thm:SP} have turned out to be useful in the theory of inner functions.
Several by-products of Theorem~\ref{Thm:SP} can be found in \cite{R2017b, RS2018}.

\section{Proof of Theorem~\ref{Thm1}}\label{Sec:5}

Before the proof of Theorem~\ref{Thm1} we recall \cite[Lemma~6]{R2017b},
which is a modification of \cite[Lemma~5]{Ahern1983}.

\begin{letterlemma}\label{Lemma:max-iq}
If $0<p\le 1$ and $g:[0,1) \rightarrow [0,\infty)$ is measurable, then
    $$\left(\int_r^1 g(s)ds\right)^p\le 2\int_r^1 \sup_{0\le x\le s} g(x)^p(1-s)^{p-1}ds$$
for $0\le r<1$.
\end{letterlemma}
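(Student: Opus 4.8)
The plan is to exploit monotonicity: the function $r\mapsto \left(\int_r^1 g(s)\,ds\right)^p$ is non-increasing and vanishes at $r=1$, so it suffices to control its (negative) derivative and integrate. Writing $G(r)=\int_r^1 g(s)\,ds$, one has $-\frac{d}{dr}G(r)^p = p\,G(r)^{p-1}g(r)$ for a.e.\ $r$ (assuming enough regularity; otherwise work with the increments directly). Hence $G(r)^p = \int_r^1 p\,G(t)^{p-1}g(t)\,dt$, and the task reduces to the pointwise bound $p\,G(t)^{p-1}g(t)\le 2\sup_{0\le x\le t}g(x)^p(1-t)^{p-1}$ for each fixed $t$. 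Since $G(t)=\int_t^1 g(s)\,ds\le (1-t)\sup_{t\le s<1}g(s)$, and $p-1<0$, a direct substitution does \emph{not} immediately work because the supremum in $G$ ranges over $[t,1)$ whereas the claimed bound uses the supremum over $[0,t]$; this mismatch is the crux.

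To get around this, the plan is to run the estimate the other way: bound $G(t)$ below, not above, is useless, so instead I would avoid differentiating $G$ and instead directly estimate $G(r)^p$ by a telescoping/layer-cake argument. First assume $g$ is bounded and, by an approximation or truncation argument, that $g$ is, say, lower semicontinuous so that $M(s):=\sup_{0\le x\le s}g(x)$ is a genuine non-decreasing function. The key trick is the elementary inequality: for $0<p\le 1$ and non-negative reals, $\left(\sum a_k\right)^p\le \sum a_k^{p}$ when... no — rather, one uses that $b^p-c^p\le (b-c)^p$ is false, so the right device is the subadditivity of $t\mapsto t^p$ on $[0,\infty)$ only through $(b-c)^p\ge b^p-c^p$, equivalently $b^p\le c^p+(b-c)^p$ for $0\le c\le b$. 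Applying this with $b=G(r)$ and $c=G(s)$ for $r<s$ gives a telescoping chain: partition $[r,1)$ by points $r=t_0<t_1<\cdots<t_N\uparrow 1$, write $G(r)^p\le \sum_{k=0}^{N-1}\bigl(G(t_k)-G(t_{k+1})\bigr)^p=\sum_k\bigl(\int_{t_k}^{t_{k+1}}g\bigr)^p$, and bound each term by $\bigl((t_{k+1}-t_k)M(t_{k+1})\bigr)^p$. Passing to the limit of finer partitions turns the sum into a Riemann–Stieltjes-type integral; the factor $(1-s)^{p-1}$ and the constant $2$ emerge from summing $(t_{k+1}-t_k)^p$ against $M(s)^p$ and comparing with $\int_r^1 M(s)^p(1-s)^{p-1}\,ds$.

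Concretely, I expect the cleanest route is: fix the dyadic-type partition $t_k$ with $1-t_{k+1}=\tfrac12(1-t_k)$, so $1-t_k=2^{-k}(1-r)$ and $t_{k+1}-t_k=2^{-k-1}(1-r)$. Then
$$
G(r)^p\le\sum_{k\ge0}\Bigl(\int_{t_k}^{t_{k+1}}g(s)\,ds\Bigr)^p\le\sum_{k\ge0}(t_{k+1}-t_k)^p\,M(t_{k+1})^p\le\sum_{k\ge0}(t_{k+1}-t_k)\,M(t_{k+1})^p\,(1-t_k)^{p-1},
$$
using $(t_{k+1}-t_k)^{p-1}=\bigl(2^{-k-1}(1-r)\bigr)^{p-1}\le\bigl(2^{-k}(1-r)\bigr)^{p-1}\cdot 2^{1-p}\le 2(1-t_k)^{p-1}$ since $0<p\le1$. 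Finally, since $M$ is non-decreasing and $(1-s)^{p-1}$ is non-decreasing in $s$ on each subinterval $[t_k,t_{k+1}]$ (as $p-1\le0$... careful: $(1-s)^{p-1}$ is \emph{increasing} in $s$ because $1-s$ decreases and the exponent is negative), each summand is $\le\int_{t_k}^{t_{k+1}}M(s)^p(1-s)^{p-1}\,ds$ up to the factor $2$ already extracted, so the whole sum telescopes to $2\int_r^1 M(s)^p(1-s)^{p-1}\,ds$, which is exactly the claimed bound. The main obstacle is purely bookkeeping — getting the constant to be exactly $2$ and handling measurable (not l.s.c.) $g$ by a standard approximation — rather than anything conceptual; the inequality $b^p\le c^p+(b-c)^p$ for $0<p\le1$ and the geometric decay of $1-t_k$ do all the real work.
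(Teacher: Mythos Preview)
The paper does not prove this lemma; it simply cites it from \cite{R2017b} (a modification of a lemma of Ahern). So there is no paper proof to compare with, and your dyadic-partition plan is a reasonable way to supply one. The subadditivity/telescoping step and the choice $1-t_k=2^{-k}(1-r)$ are correct, and indeed
\[
G(r)^p\le\sum_{k\ge0}\Bigl(\int_{t_k}^{t_{k+1}}g\Bigr)^p\le\sum_{k\ge0}(t_{k+1}-t_k)^p\,M(t_{k+1})^p,
\]
with $M(s)=\sup_{0\le x\le s}g(x)$.

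The gap is in your last comparison. You claim that $(t_{k+1}-t_k)\,M(t_{k+1})^p\,(1-t_k)^{p-1}\le\int_{t_k}^{t_{k+1}}M(s)^p(1-s)^{p-1}\,ds$ because ``$M$ is non-decreasing and $(1-s)^{p-1}$ is increasing.'' But the monotonicity of $M$ goes the \emph{wrong way} here: for $s\in[t_k,t_{k+1}]$ one only has $M(s)\le M(t_{k+1})$, not $\ge$, so the integrand can be \emph{smaller} than the term you want to dominate. As written the inequality is false in general (take $g$ supported near $t_{k+1}$).

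The fix is a one-line shift: use $t_{k+1}-t_k=1-t_{k+1}$ to write each summand as $(t_{k+1}-t_k)(1-t_{k+1})^{p-1}M(t_{k+1})^p$, and compare with the \emph{next} dyadic interval $[t_{k+1},t_{k+2}]$, where both $M(s)\ge M(t_{k+1})$ and $(1-s)^{p-1}\ge(1-t_{k+1})^{p-1}$ hold. Since $t_{k+2}-t_{k+1}=\tfrac12(t_{k+1}-t_k)$, this gives
\[
(t_{k+1}-t_k)(1-t_{k+1})^{p-1}M(t_{k+1})^p\le 2\int_{t_{k+1}}^{t_{k+2}}M(s)^p(1-s)^{p-1}\,ds,
\]
and summing over $k$ yields exactly $2\int_{t_1}^1 M(s)^p(1-s)^{p-1}\,ds\le 2\int_r^1 M(s)^p(1-s)^{p-1}\,ds$, with the constant $2$ you were aiming for. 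No approximation by lower semicontinuous functions is needed: $M$ is non-decreasing in $s$ and hence Borel measurable automatically.
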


\noindent
\emph{Proof of Theorem~\ref{Thm1}.}
Let $\frac{4}{5}\le s<1$ and choose $n=n(s)\in \N\setminus \{1,2,3,4\}$ such that $1-\frac{1}{n}\le s<1-\frac{1}{n+1}$.
Set $f_n(z)=z^n$ for $z\in \D$. Since
    \begin{equation*}
    \begin{split}
    \left|e^{in(\t+h)}-e^{in\t}\right|^2 &= \left|1-e^{inh}\right|^2= 2(1-\cos(nh))=2n^2h^2\sum_{k=1}^\infty (-1)^{k-1} \frac{(nh)^{2(k-1)}}{(2k)!} \\
    &\ge 2n^2h^2 \left(\frac{1}{2}-\frac{n^2h^2}{24}\right) \ge \frac{2}{3}n^2h^2, \quad 0<h<\frac{2}{n},
    \end{split}
    \end{equation*}
we have
    \begin{equation*}
    \begin{split}
    &\int_{1/2}^1 \om_p(1-r, f_n)^q \frac{\nu(r)}{(1-r)^q}\,dr
    \asymp \left(\int_{1/2}^{1-2/n} + \int_{1-2/n}^1\right) \sup_{0<h<1-r} |1-e^{inh}|^q \frac{\nu(r)}{(1-r)^q}\,dr \\
    &\quad \ge \int_{1/2}^{1-2/n} |1-e^{i2}|^q \frac{\nu(r)}{(1-r)^q}\,dr + \int_{1-2/n}^s |1-e^{in(1-r)}|^q \frac{\nu(r)}{(1-r)^q}\,dr \\
    &\quad \gtrsim \int_{1/2}^{1-2/n} \frac{\nu(r)}{(1-r)^q}\,dr + \int_{1-2/n}^s (n+1)^q \nu(r)\,dr \\
    &\quad \ge \int_{1/2}^{1-2/n} \frac{\nu(r)}{(1-r)^q}\,dr+(1-s)^{-q}\int_{1-2/n}^s \nu(r)\,dr \\
    &\quad \ge \int_{1/2}^s \frac{\nu(r)}{(1-r)^q}\,dr \asymp \int_0^s \frac{\nu(r)}{(1-r)^q}\,dr.
    \end{split}
    \end{equation*}
Using the hypothesis $\nu\in \DD$ together with Lemma~\ref{Lemma:DD}(iv)(ii) in a similar manner as in the proof of \cite[Theorem~1]{PR2016}, we obtain
    \begin{equation*}
    \begin{split}
    \|f_n'\|_{A_\nu^{p,q}}^q &\asymp n^q \int_0^1 r^{q(n-1)+1}\nu(r)\,dr \asymp n^q \int_{1-\frac{1}{q(n-1)+1}}^1 \nu(r)\,dr \\
    &\asymp n^q \int_{1-\frac{1}{n+1}}^1 \nu(r)\,dr \le \frac{\widehat{\nu}(s)}{(1-s)^q}.
    \end{split}
    \end{equation*}
Finally combining the estimates above and using the inequality
    $$\int_0^t \frac{\nu(r)}{(1-r)^q}\,dr \lesssim 1 \asymp \frac{\widehat{\nu}(t)}{(1-t)^q}, \quad 0<t<\frac{4}{5},$$
we deduce that if $\nu\in \DD$ and \eqref{Eq:SP} is satisfied for all $f\in H^p$, then $\nu \in \DD_p$.
Hence it suffices to prove the converse statement.

Let $f\in H^p$, $0\le \t<2\pi$, $\frac12<r<1$ and $0<h<\frac12$. Set $\r=r-h$ and
$\Gamma$ be the contour which goes first rapidly from $re^{i\t}$ to $\r e^{i\t}$,
then along the circle $\{z:|z|=\r\}$ to $\r e^{i(\t+h)}$ and finally rapidly to $re^{i(\t+h)}$. Since
    $$f(re^{i(\t+h)})-f(re^{i\t})=\int_{\Gamma} f'(z)\,dz,$$
we have
    $$|f(re^{i(\t+h)})-f(re^{i\t})|\le \int_\r^r |f'(se^{i\t})|\,ds + \int_\t^{\t+h} |f'(\r e^{it})|\,dt + \int_\r^r |f'(se^{i(\t+h)})|\,ds.$$
Consequently, the discrete and continuous forms of Minkowski's inequality, a change of variable and Hardy's convexity theorem yield
    \begin{equation}\label{Eq:thm1p-1}
    \begin{split}
    &\left(\int_0^{2\pi} |f(re^{i(\t+h)})-f(re^{i\t})|^p\,d\t\right)^{1/p}\le
    \left(\int_0^{2\pi} \left(\int_\r^r |f'(se^{i\t})|\,ds\right)^p d\t \right)^{1/p} \\
    &\quad+ \left(\int_0^{2\pi} \left(\int_0^{h} |f'(\r e^{i(x+\t)})|\,dx \right)^p d\t \right)^{1/p}
    +\left(\int_0^{2\pi} \left(\int_\r^r |f'(se^{i(\t+h)})|\,ds \right)^p d\t \right)^{1/p} \\
    &\quad \le 2 \int_\r^r M_p(s,f')\,ds + h M_p(\r,f') \le 3\int_{r-h}^r M_p(s,f')\,ds.
    \end{split}
    \end{equation}
Note that the deduction above can be found, for instance, in the proof of \cite[Theorem~5.4]{Duren1970}.

By raising both sides of \eqref{Eq:thm1p-1} to power $q$, adding $\sup_{0<h<1-t}$ and then integrating from $1/2$ to $r$ with respect to $\nu(t)\,dt/(1-t)^{q}$, we obtain
    \begin{equation*}
    \begin{split}
    &\int_{1/2}^r \sup_{0<h<1-t} \left(\int_0^{2\pi} |f(re^{i(\t+h)})-f(re^{i\t})|^p \,d\t\right)^{q/p} \frac{\nu(t)}{(1-t)^q}\,dt \\
    &\quad \lesssim \int_{1/2}^r \left(\int_{r-(1-t)}^r M_p(s,f')\,ds\right)^q \frac{\nu(t)}{(1-t)^q}\,dt.
    \end{split}
    \end{equation*}
Letting $r \rightarrow 1^-$, using the monotone and mean convergence theorems together with the hypothesis $f\in H^p$, we deduce
    \begin{equation*}
    \begin{split}
    \int_{1/2}^1 \om_p(1-t, f)^q \frac{\nu(t)}{(1-t)^q}\,dt
    \lesssim \int_{1/2}^1 \left(\int_{t}^1 M_p(s,f')\,ds\right)^q \frac{\nu(t)}{(1-t)^q}\,dt=:\mathcal{I}.
    \end{split}
    \end{equation*}
Hence it suffices to show $\mathcal{I}\lesssim \|f'\|_{A_\nu^{p,q}}^q$. Note that the argument of this estimate uses ideas from \cite{R2017b}.

If $q\le 1$, then Lemma~\ref{Lemma:max-iq} with the choice $g(s)=M_p(s,f')$, Hardy's convexity theorem, Fubini's theorem, the hypothesis $\nu\in \DD_q$ and Lemma~\ref{Lemma:asymp} give
    \begin{equation*}
    \begin{split}
    \mathcal{I}&\lesssim \int_0^1 \frac{\nu(t)}{(1-t)^q} \int_t^1 \sup_{0\le x\le s}M_p^q(x,f')(1-s)^{q-1}\,ds \,dt \\
    &=\int_0^1 \frac{\nu(t)}{(1-t)^q} \int_t^1 M_p^q(s,f')(1-s)^{q-1}\,ds \,dt \\
    &=\int_0^1 M_p^q(s,f')(1-s)^{q-1} \int_0^s \frac{\nu(t)}{(1-t)^q}\,dt\,ds \\
    &\lesssim \int_0^1 M_p^q(s,f')\frac{\widehat{\nu}(s)}{1-s}\,ds \asymp \|f'\|_{A_\nu^{p,q}}^q
    \end{split}
    \end{equation*}
for all $f \in \HO(\D)$. Hence the assertion for $q\le 1$ is proved.
If $q>1$, $0<\e<q/(\DD_q(\nu)+1)$ and $h(s)=(1-s)^{\frac{q-1-\e}{q}}$, then H\"older's inequality and Fubini's theorem yield
    \begin{equation*}
    \begin{split}
    \mathcal{I}&\lesssim \int_0^1 \int_t^1 M_p^q(s,f')h(s)^q\,ds \left(\int_t^1 h(r)^{-\frac{q}{q-1}}dr\right)^{q-1}\frac{\nu(t)}{(1-t)^q}\,dt \\
    &\asymp \int_0^1 \frac{\nu(t)}{(1-t)^{q-\e}} \int_t^1 M_p^q(s,f')(1-s)^{q-1-\e}\,ds\,dt \\
    &= \int_0^1 M_p^q(s,f')(1-s)^{q-1-\e} \int_0^s \frac{\nu(t)}{(1-t)^{q-\e}}\,dt \,ds
    \end{split}
    \end{equation*}
for all $f \in \HO(\D)$.
Since $\nu\in \DD_{q-\e}$ by Lemma~\ref{Lemma:self-imp}, the assertion for $q>1$ follows from Lemma~\ref{Lemma:asymp}. This completes the proof. \hfill$\Box$

\medskip

Since
    \begin{equation*}
    \begin{split}
    \int_0^{1/2} \om_p(1-t, f)^q \frac{\nu(t)}{(1-t)^q}\,dt \le 2^{2q} \|f\|_{H^p}^q \int_0^{1/2} \nu(t)\,dt
    \end{split}
    \end{equation*}
by Minkowski's inequality, Theorem~\ref{Thm1} has the following consequence.

\begin{corollary}\label{Coro_Thm1}
Let $1\le p<\infty$, $0<q<\infty$ and $\nu \in \RR \cap \DD_q$. Then
there exists a constant $C=C(p,q,\nu)>0$ such that
    \begin{equation*}
    \int_{0}^1 \om_p(1-r, f)^q \frac{\nu(r)}{(1-r)^q}\,dr \le C \left(\|f'\|_{A_\nu^{p,q}}^q+\|f\|_{H^p}^q \right)
    \end{equation*}
for all $f\in H^p$.
\end{corollary}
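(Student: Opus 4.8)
The plan is to split the integral at the midpoint $r=1/2$ and bound the two resulting pieces by genuinely different arguments. On the inner part $[0,1/2)$ the weight $\nu(r)/(1-r)^q$ is comparable to $\nu(r)$ and causes no trouble, whereas on the outer part $[1/2,1)$ it is singular as $r\to1^-$, and there the entire difficulty has already been packaged into Theorem~\ref{Thm1}.

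First I would invoke Theorem~\ref{Thm1}: since $\nu\in\RR\cap\DD_q$, in particular $\nu\in\DD_q$, so inequality \eqref{Eq:SP} holds with a constant $C=C(p,q,\nu)$, giving
\[
\int_{1/2}^1 \om_p(1-r,f)^q\,\frac{\nu(r)}{(1-r)^q}\,dr\le C\,\|f'\|_{A_\nu^{p,q}}^q
\]
for every $f\in H^p$. For the inner part I would use that the $L^p$ modulus of continuity is globally controlled by the boundary norm: by the continuous Minkowski inequality together with the translation invariance of arc-length measure on $\T$, one has $\om_p(t,f)\le 2\|f\|_{L^p}=2\|f\|_{H^p}$ for all $t$, the last equality being the identification $\|f\|_{H^p}=\|f\|_{L^p}$ recalled in the introduction. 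Since $1/(1-r)^q\le 2^q$ on $[0,1/2]$, this yields
\[
\int_0^{1/2}\om_p(1-r,f)^q\,\frac{\nu(r)}{(1-r)^q}\,dr\le 2^{2q}\,\|f\|_{H^p}^q\int_0^{1/2}\nu(r)\,dr,
\]
which is exactly the estimate displayed just before the statement; here $\int_0^{1/2}\nu(r)\,dr\le \widehat{\nu}(0)<\infty$ because $\nu$ is a weight, hence integrable.

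Adding the two bounds and taking $C$ to be the larger of the two constants, which depends only on $p$, $q$ and $\nu$, completes the argument. There is no genuine obstacle here — the substance lies entirely in Theorem~\ref{Thm1} and its proof. The only point worth emphasising is structural: unlike near $r=1$, the quantity $\om_p(1-r,f)$ does not decay as $r\to0^+$, so the contribution of a neighbourhood of the origin cannot be absorbed into the $\|f'\|_{A_\nu^{p,q}}^q$ term and must instead be estimated by $\|f\|_{H^p}^q$. This is precisely why the term $\|f\|_{H^p}^q$ appears on the right-hand side and, in general, cannot be dropped.
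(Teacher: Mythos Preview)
Your proposal is correct and follows exactly the paper's own argument: split the integral at $r=1/2$, apply Theorem~\ref{Thm1} on $[1/2,1)$, and on $[0,1/2)$ use the trivial Minkowski bound $\om_p(t,f)\lesssim \|f\|_{H^p}$ together with $(1-r)^{-q}\le 2^q$, which is precisely the displayed estimate preceding the corollary. There is nothing to add.
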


Note that Corollary~\ref{Coro_Thm1} is a part of Theorem~\ref{Thm2}.
We state it here as an independent result because it is needed for the proof of Theorem~\ref{Thm2}.

\section{Proof of Theorem~\ref{Thm2}}\label{Sec:6}

We go directly to the proof of Theorem~\ref{Thm2}.

\medskip

\noindent
\emph{Proof of Theorem~\ref{Thm2}.} Let $0\le r<1$ and $0\le t<2\pi$. Since
    $$\int_0^{2\pi} \frac{e^{i\t}d\t}{(e^{i\t}-re^{it})^2}=0,$$
Cauchy's integral formula gives
    \begin{equation*}
    \begin{split}
    |f'(re^{it})|&=\frac{1}{2\pi(1-r^2)} \left|\int_0^{2\pi}\left(f(e^{i\t})-f(re^{it})\right)\frac{e^{i\t}(1-r^2)}{(e^{i\t}-re^{it})^2}\,d\t\right| \\
    &\le \frac{1}{1-r} \int_0^{2\pi} \left|f(e^{i\t})-f(re^{it})\right| d\mu_{re^{it}}(\t), \quad f\in H^1.
    \end{split}
    \end{equation*}
Raising both sides to power $p$, integrating from $0$ to $2\pi$ with respect to $dt$,
then raising both sides to power $q/p$ and finally integrating from $0$ to 1 with respect to $\nu(r)\,dr$, we obtain
    \begin{equation} \label{Eq:thm3-1-est}
    \begin{split}
    \|f'\|_{A_\nu^{p,q}}^q
    \le \int_0^1 \left(\int_0^{2\pi} \left(\int_0^{2\pi} |f(e^{i\t})-f(re^{it})| d\mu_{re^{it}}(\t) \right)^p dt\right)^{q/p} \frac{\nu(r)}{(1-r)^q}\,dr
    \end{split}
    \end{equation}
for all $f\in H^p$.

Let $f\in H^p$, $0<q\le 1$, and set
    $$\mathcal{I}(r)=\left(\int_0^{2\pi} \left(\int_0^{2\pi} |f(e^{i\t})-f(re^{it})| d\mu_{re^{it}}(\t) \right)^p dt\right)^{q/p}.$$
By the proof of \cite[Theorem~2.1]{Dyakonov1998}, we know that
    \begin{equation}\label{Eq:thm3-dyakonov-esti}
    \begin{split}
    \mathcal{I}(r)\lesssim \left(\sum_{k=0}^\infty 2^{-k} \om_p(2^k(1-r), f) \right)^q.
    \end{split}
    \end{equation}
Hence the sub-additivity of $g(x)=x^q$ for $x\ge 0$ and Fubini's theorem give
    \begin{equation}\label{Eq:thm3-1-q<1}
    \begin{split}
    \int_0^1 \frac{\mathcal{I}(r)\nu(r)}{(1-r)^{q}}\,dr &\lesssim  \sum_{k=0}^\infty 2^{-qk} \int_0^1 \om_p(2^{k}(1-r), f)^q \frac{\nu(r)}{(1-r)^{q}}\,dr.
    \end{split}
    \end{equation}
Next we show that the weight $\nu(r)$ in the right-hand side
can be replaced by $\frac{\widehat{\nu}(r)}{1-r}$ without losing any essential information.

Set $\psi(z)=\widehat{\nu}(z)/(1-|z|)$ for $z\in \D$, and remind that $\widehat{\nu}(r)\asymp \widehat{\psi}(r)$ for $0\le r<1$
by Lemmas~\ref{Lemma:DD}(ii)~and~\ref{Lemma:DDD}. In particular, $\psi$ belongs to class $\RR$; and thus, there exist $K=K(\psi)>1$ and $C=C(\psi)>1$ such that
    \begin{equation}\label{Eq:psi-DDD}
    \begin{split}
    \widehat{\psi}(r)\ge C \widehat{\psi}\left(1-\frac{1-r}{K}\right), \quad 0\le r<1.
    \end{split}
    \end{equation}
Let $k\in \N \cup \{0\}$ and $r_n=1-K^{-n}$ for $n\in \N \cup \{0\}$.
Using \eqref{Eq:psi-DDD} together with Lemma~\ref{Lemma:DD}(ii), we obtain
    \begin{equation}\label{Eq:psi-DDD-calcu}
    \begin{split}
    (C-1)\widehat{\psi}(r_{n+1})&=C\widehat{\psi}\left(1-\frac{1-r_n}{K}\right)-\widehat{\psi}(r_{n+1})\le \widehat{\psi}(r_{n})-\widehat{\psi}(r_{n+1})\\
    &=\int_{r_n}^{r_{n+1}}\psi(r)\,dr\le \widehat{\psi}(r_{n}) \asymp \widehat{\psi}(r_{n+1}).
    \end{split}
    \end{equation}
Now Minkowski's inequality, the monotonicity of $\om_p(s, f)$ with $s$ and \eqref{Eq:psi-DDD-calcu} yield
    \begin{equation}\label{Eq:thm3-om-scal}
    \begin{split}
    \int_{0}^1  \om_p(2^k(1-r), f)^q \frac{\nu(r)}{(1-r)^{q}}\,dr
    &\lesssim \sum_{n=1}^\infty \int_{r_n}^{r_{n+1}} \om_p(2^k(1-r), f)^q \frac{\nu(r)}{(1-r)^{q}}\,dr + \|f\|_{H^p}^q \\
    &\le \sum_{n=1}^\infty \om_p(2^k(1-r_n), f)^q \frac{\widehat{\nu}(r_n)}{(1-r_{n+1})^{q}} + \|f\|_{H^p}^q \\
    &\asymp \sum_{n=1}^\infty \om_p(2^k(1-r_n), f)^q \frac{\widehat{\psi}(r_n)}{(1-r_n)^{q}} + \|f\|_{H^p}^q \\
    &\asymp \sum_{n=0}^\infty \om_p(2^k(1-r_{n+1}), f)^q \frac{\int_{r_n}^{r_{n+1}}\psi(r)\,dr}{(1-r_{n})^{q}} + \|f\|_{H^p}^q \\
    &\le \sum_{n=0}^\infty \int_{r_n}^{r_{n+1}} \om_p(2^k(1-r), f)^q \frac{\psi(r)}{(1-r)^{q}}\,dr + \|f\|_{H^p}^q \\
    &= \int_0^1  \om_p(2^k(1-r), f)^q \frac{\widehat{\nu}(r)}{(1-r)^{q+1}}\,dr + \|f\|_{H^p}^q.
    \end{split}
    \end{equation}
It is worth noting that a similar deduction works also in the opposite direction.

Using \eqref{Eq:thm3-1-q<1} and \eqref{Eq:thm3-om-scal}, we obtain
    \begin{equation*}
    \begin{split}
    \int_0^1 \frac{\mathcal{I}(r)\nu(r)}{(1-r)^{q}}\,dr &\lesssim  \left[\sum_{k=0}^\infty 2^{-qk} \int_0^{1-2^{-k}} \om_p(2^{k}(1-r), f)^q \frac{\widehat{\nu}(r)}{(1-r)^{q+1}}\,dr + \|f\|_{H^p}^q \right] \\
    &\quad + \sum_{k=0}^\infty 2^{-qk} \int_{1-2^{-k}}^1 \om_p(2^{k}(1-r), f)^q \frac{\widehat{\nu}(r)}{(1-r)^{q+1}}\,dr \\
    &=:\mathcal{I}_1+\mathcal{I}_2.
    \end{split}
    \end{equation*}
Minkowski's inequality, \eqref{Eq:DDp-hat} with $p$ being replaced by $q$ and Lemma~\ref{Lemma:DDD} yield
    \begin{equation*}
    \begin{split}
    \mathcal{I}_1&\lesssim \|f\|_{H^p}^q\sum_{k=0}^\infty 2^{-qk}\int_0^{1-2^{-k}} \frac{\widehat{\nu}(r)}{(1-r)^{q+1}}\,dr \\
    &\lesssim \|f\|_{H^p}^q\sum_{n=0}^\infty \widehat{\nu}(1-2^{-k}) \\
    &\lesssim \whw(0)\|f\|_{H^p}^q\sum_{n=0}^\infty 2^{-\a k}\asymp \|f\|_{H^p}^q
    \end{split}
    \end{equation*}
for some $\a=\a(\nu)>0$.
The continuity of $\whw$, changes of variables, Fubini's theorem and the hypothesis $\nu\in \RR$ give
     \begin{equation*}
    \begin{split}
    \mathcal{I}_2 &\asymp \int_0^1 \frac{\om_p(1-s, f)^q}{(1-s)^{q+1}}\int_0^\infty \widehat{\nu}\left(1-2^{-k}(1-s)\right)\,dk \,ds  \\
    &=\frac{1}{\log 2}\int_0^1 \frac{\om_p(1-s, f)^q}{(1-s)^{q+1}}\int_s^1 \frac{\widehat{\nu}(x)}{1-x}\,dx\,ds \\
    &\asymp \int_0^1  \om_p(1-s, f)^q \frac{\widehat{\nu}(s)}{(1-s)^{q+1}}\,ds.
    \end{split}
    \end{equation*}
Summarizing, we have shown
    \begin{equation}\label{Eq:proof-thm2-q<1}
    \begin{split}
    \int_0^1 \frac{\mathcal{I}(r)\nu(r)}{(1-r)^{q}}\,dr \lesssim \int_0^1 \om_p(1-s, f)^q \frac{\widehat{\nu}(s)}{(1-s)^{q+1}}\,ds + \|f\|_{H^p}^q.
    \end{split}
    \end{equation}
Applying a similar argument as in \eqref{Eq:thm3-om-scal}, we can replace $\widehat{\nu}(s)$ in the right-hand side of \eqref{Eq:proof-thm2-q<1} by $\nu(s)(1-s)$.
Consequently, \eqref{Eq:thm3-1-est} and Corollary~\ref{Coro_Thm1} imply \eqref{Eq:SP3} for all $f\in H^p$.
Hence the assertion for $q\le 1$ is proved.

Let $1<q<\infty$. Then \eqref{Eq:thm3-dyakonov-esti}, the continuous form of Minkowski's inequality, \eqref{Eq:thm3-om-scal} and well-known inequalities give
     \begin{equation*}
    \begin{split}
    \int_0^1 \frac{\mathcal{I}(r)\nu(r)}{(1-r)^{q}}\,dr &\lesssim  \int_0^1 \left(\sum_{k=0}^\infty 2^{-k} \om_p(2^{k}(1-r), f) \right)^q\frac{\nu(r)}{(1-r)^{q}}\,dr \\
    &\le \left(\sum_{k=0}^\infty 2^{-k}\left(\int_{0}^1  \om_p(2^{k}(1-r), f)^q\frac{\nu(r)}{(1-r)^{q}} \,dr\right)^{1/q}\right)^{q} \\
    &\lesssim \left(\sum_{k=0}^\infty 2^{-k}\left(\int_{0}^1  \om_p(2^{k}(1-r), f)^q\frac{\widehat{\nu}(r)}{(1-r)^{q+1}} \,dr\right)^{1/q}\right)^{q} + \|f\|_{H^p}^q  \\
    &\asymp \left[\left(\sum_{k=0}^\infty 2^{-k}\left(\int_0^{1-2^{-k}}  \om_p(2^{k}(1-r), f)^q\frac{\widehat{\nu}(r)}{(1-r)^{q+1}} \,dr\right)^{1/q}\right)^{q} + \|f\|_{H^p}^q \right]  \\
    &\quad +\left(\sum_{k=0}^\infty 2^{-k}\left(\int_{1-2^{-k}}^1  \om_p(2^{k}(1-r), f)^q\frac{\widehat{\nu}(r)}{(1-r)^{q+1}} \,dr\right)^{1/q}\right)^{q} \\
    &=:\mathcal{I}_3 + \mathcal{I}_4.
    \end{split}
    \end{equation*}
Minkowski's inequality, \eqref{Eq:DDp-hat} with $p$ being replaced by $q$ and Lemma~\ref{Lemma:DDD} yield
    \begin{equation*}
    \begin{split}
    \mathcal{I}_3 &\lesssim \|f\|_{H^p}^q\left(\sum_{k=0}^\infty 2^{-k}\left(\int_0^{1-2^{-k}} \frac{\widehat{\nu}(r)}{(1-r)^{q+1}} \,dr\right)^{1/q}\right)^{q} \\
    &\lesssim \|f\|_{H^p}^q\left(\sum_{k=0}^\infty \whw(1-2^{-k})^{1/q}\right)^{q} \asymp \|f\|_{H^p}^q.
    \end{split}
    \end{equation*}
By Lemma~\ref{Lemma:DDD}, there exists a constant $\a=\a(\nu)>0$ such that
    $$\widehat{\nu}\left(1-2^{-k}(1-s)\right)\lesssim 2^{-\a k}\widehat{\nu}(s), \quad 0\le s\le 1-2^{-k}(1-s)<1.$$
Using this together with a change of variable and modification of \eqref{Eq:thm3-om-scal}, we get
    \begin{equation}\label{Eq:thm3-q>1-last}
    \begin{split}
    \mathcal{I}_4&= \left(\sum_{k=0}^\infty \left(\int_0^1  \om_p(2^{k}(1-s), f)^q\frac{\widehat{\nu}\left(1-2^{-k}(1-s)\right)}{(1-s)^{q+1}} \,ds\right)^{1/q}\right)^{q} \\
    &\lesssim \left(\sum_{k=0}^\infty 2^{-\a k/q}\left(\int_0^1 \om_p(1-s, f)^q\frac{\widehat{\nu}(s)}{(1-s)^{q+1}} \,ds\right)^{1/q}\right)^{q} \\
    &\asymp \int_0^1\om_p(1-s, f)^q \frac{\widehat{\nu}(s)}{(1-s)^{q+1}}\,ds \\
    &\lesssim \int_0^1\om_p(1-s, f)^q \frac{\nu(s)}{(1-s)^{q}}\,ds + \|f\|_{H^p}^q.
    \end{split}
    \end{equation}
Finally \eqref{Eq:thm3-1-est}, \eqref{Eq:thm3-q>1-last} and Corollary~\ref{Coro_Thm1} imply \eqref{Eq:SP3} for all $f\in H^p$.
This completes the proof. \hfill$\Box$

\section{Proof of Theorem~\ref{Thm3}}\label{Sec:7}

Before the proof of Theorem~\ref{Thm3} we recall the following result, which is a part of the argument of \cite[Theorem~1.1]{Boe2003}.

\begin{letterlemma}\label{lemma:outer-upper}
If $O_{\phi}$ is an outer function, then
    \begin{equation*}
    \begin{split}
    |O_{\phi}'(z)|\le \frac{4}{1-|z|} \left(\int_0^{2\pi} \left|\phi(e^{i\t})-\int_0^{2\pi} \phi(e^{is}) d\mu_z(s)\right| d\mu_z(\t)
    +\int_0^{2\pi} \phi(e^{ih}) d\mu_z(h) - |O_{\phi}(z)| \right)
    \end{split}
    \end{equation*}
for all $z\in \D$.
\end{letterlemma}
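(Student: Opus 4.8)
The plan is to differentiate the defining formula for $O_\phi$, bound the resulting singular integral by the Poisson measure $d\mu_z$, and then reduce the logarithmic integrand to the linear one appearing on the right-hand side via the elementary convexity bound $u\le e^u-1$.

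First I would compute, by differentiating $\log O_\phi$ under the integral sign,
\[
O_\phi'(z)=\frac{O_\phi(z)}{\pi}\int_0^{2\pi}\frac{e^{i\t}}{(e^{i\t}-z)^2}\log\phi(e^{i\t})\,d\t ,\qquad z\in\D .
\]
Since a one-line residue computation gives $\int_0^{2\pi}\frac{e^{i\t}}{(e^{i\t}-z)^2}\,d\t=0$, I may replace $\log\phi(e^{i\t})$ by $\log\phi(e^{i\t})-c$, where $c:=\int_0^{2\pi}\log\phi(e^{i\t})\,d\mu_z(\t)=\log|O_\phi(z)|$. Using $\bigl|\tfrac{e^{i\t}}{(e^{i\t}-z)^2}\bigr|=|e^{i\t}-z|^{-2}$ and the identity $\frac{d\t}{|e^{i\t}-z|^2}=\frac{2\pi\,d\mu_z(\t)}{1-|z|^2}$, this yields
\[
|O_\phi'(z)|\le \frac{2|O_\phi(z)|}{1-|z|^2}\int_0^{2\pi}\bigl|\log\phi(e^{i\t})-c\bigr|\,d\mu_z(\t)\le \frac{2|O_\phi(z)|}{1-|z|}\int_0^{2\pi}\bigl|\log\phi(e^{i\t})-c\bigr|\,d\mu_z(\t),
\]
where the last step uses $1-|z|^2\ge 1-|z|$.

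Next I would pass from the logarithm to the linear quantity. Writing $\phi(e^{i\t})=|O_\phi(z)|\,e^{u(\t)}$ with $u:=\log\phi-c$, one has $\int_0^{2\pi}u\,d\mu_z=0$, hence $\int_0^{2\pi}|u|\,d\mu_z=2\int_{\{u\ge0\}}u\,d\mu_z$. On $\{u\ge0\}$ the bound $u\le e^u-1$ applies, and writing $e^{u}-1=(e^u-\overline{e})+(\overline{e}-1)$ with $\overline{e}:=\int_0^{2\pi}e^{u}\,d\mu_z$ and using $\mu_z(\{u\ge0\})\le 1$ gives
\[
\int_0^{2\pi}|u|\,d\mu_z\le 2\int_0^{2\pi}\bigl|e^{u(\t)}-\overline{e}\bigr|\,d\mu_z(\t)+2(\overline{e}-1).
\]
Multiplying by $|O_\phi(z)|$ and using $|O_\phi(z)|e^{u(\t)}=\phi(e^{i\t})$ and $|O_\phi(z)|\,\overline{e}=\int_0^{2\pi}\phi(e^{is})\,d\mu_z(s)$, the right-hand side becomes exactly twice the bracketed expression in the statement; combining with the estimate of the previous paragraph, the factor $2\cdot 2=4$ emerges and the lemma follows.

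The computations are elementary; the \emph{main obstacle} is really just bookkeeping: justifying differentiation under the integral together with the vanishing of $\int_0^{2\pi}\frac{e^{i\t}}{(e^{i\t}-z)^2}\,d\t$, and keeping the signs straight when splitting $\int|u|\,d\mu_z$ over $\{u\ge0\}$ and $\{u<0\}$ and discarding $\mu_z(\{u\ge0\})\le 1$. If the right-hand side is infinite there is nothing to prove, so one may assume at the outset that $\phi$ and $\log\phi$ are integrable against $d\mu_z$, which holds for outer functions in the range considered.
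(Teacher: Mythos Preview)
Your argument is correct. The paper does not supply its own proof of this lemma; it is simply quoted from B\o e \cite[proof of Theorem~1.1]{Boe2003}, and your write-up reconstructs precisely that argument: differentiate the Herglotz formula, subtract the mean $c=\log|O_\phi(z)|$ using $\int_0^{2\pi}e^{i\t}(e^{i\t}-z)^{-2}\,d\t=0$, bound the kernel by the Poisson kernel, and convert $\int|u|\,d\mu_z$ to the linear quantity via $u\le e^{u}-1$ on $\{u\ge0\}$ together with Jensen's inequality $\overline{e}\ge1$.
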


\medskip

\noindent
\emph{Proof of Theorem~\ref{Thm3}.} Let $f \in H^p$.
Then there exist an inner function $I$ and an outer function $O_{\phi}$ such that $f=IO_{\phi}$. Hence the Schwarz-Pick lemma,
Lemma~\ref{lemma:outer-upper} and the fact that $\phi(\xi)=|f(\xi)|$ for almost every $\xi \in \T$ yield
    \begin{equation}\label{Eq:thm3-slow-est}
    \begin{split}
    &|f'(z)|(1-|z|)\le \left(|I(z)O_{\phi}'(z)| + |I'(z)O_{\phi}(z)|\right)(1-|z|) \\
    &\quad \le |O_{\phi}'(z)|(1-|z|) + 2|O_{\phi}(z)|(1-|I(z)|)\\
    &\quad \le 4\int_0^{2\pi} \left||f(e^{i\t})|-\int_0^{2\pi} |f(e^{is})| d\mu_z(s)\right| d\mu_z(\t)
    +4\left(\int_0^{2\pi} |f(e^{ih})| d\mu_z(h) - |f(z)|\right)
    \end{split}
    \end{equation}
for all $z\in \D$. Write $z=re^{it}$. Raising both sides of \eqref{Eq:thm3-slow-est} to power $p$, integrating from $0$ to $2\pi$ with respect to $dt$, then raising both sides to power $q/p$, integrating from $0$ to $1$ with respect to $\nu(r) dr/(1-r)^q$ and finally splitting the right-hand side into two parts, we obtain
    $$\|f'\|_{A_\nu^{p,q}}^q \lesssim F_1(f)+F_2(f),$$
which is the first inequality in \eqref{Eq:SP4}.

Set
    \begin{equation*}
    \begin{split}
    \Gamma=\Gamma(z,f)=\left\{\t\in [0,2\pi):\int_0^{2\pi} |f(e^{is})| d\mu_z(s)\le |f(e^{i\t})|\right\}, \quad z\in \D.
    \end{split}
    \end{equation*}
Then elementary calculations together with the subharmonicity of $|f|$ yield
    \begin{equation*}
    \begin{split}
    &\int_0^{2\pi} \left||f(e^{i\t})|-\int_0^{2\pi} |f(e^{is})| d\mu_z(s)\right| d\mu_z(\t) \\
    %&\quad =2\int_{\Gamma}\left(|f(e^{i\t})|-\int_0^{2\pi} |f(e^{is})| d\mu_z(s)\right)\,d\mu_z(\t)\\
    %&\quad \quad+ \int_0^{2\pi}\left(\int_0^{2\pi} |f(e^{is})| d\mu_z(s)-|f(e^{i\t})|\right)\,d\mu_z(\t) \\
    &\quad= 2\int_{\Gamma}\left(|f(e^{i\t})|-\int_0^{2\pi} |f(e^{is})| d\mu_z(s)\right)\,d\mu_z(\t) \\
    &\quad \le 2 \int_{\Gamma} \left(|f(e^{i\t})|-|f(z)|\right) d\mu_{z}(\t), \quad z\in \D.
    \end{split}
    \end{equation*}
It follows that
    \begin{equation*}
    \begin{split}
    &\int_0^{2\pi} \left||f(e^{i\t})|-\int_0^{2\pi} |f(e^{is})| d\mu_z(s)\right| d\mu_z(\t)
    +\left(\int_0^{2\pi} |f(e^{ih})| d\mu_z(h) - |f(z)|\right) \\
    &\quad \le 2 \left(\int_{\Gamma}+\int_0^{2\pi}\right) \left|f(e^{i\t})-f(z)\right| d\mu_{z}(\t) \\
    &\quad \le 4 \int_0^{2\pi} \left|f(e^{i\t})-f(z)\right| d\mu_{z}(\t), \quad z\in \D.
    \end{split}
    \end{equation*}
Doing a corresponding integration procedure for this estimate as above and applying Theorem~\ref{Thm2}, we obtain
    $$F_1(f)+F_2(f)\lesssim \|f'\|_{A_\nu^{p,q}}^q + \|f\|_{H^p}^q,$$
which is the last inequality in \eqref{Eq:SP4}. This completes the proof. \hfill$\Box$

\medskip

\noindent
\textbf{Acknowledgements.} The author thanks Antti Per\"al\"a and Toshiyuki Sugawa for valuable comments,
Tohoku University for hospitality during his visit there, and the referees for careful reading of the manuscript.

\end{document}